\newcommand{\Eb}{\mathbb{E}}
\newcommand{\Nb}{\mathbb{N}}
\newcommand{\Rb}{\mathbb{R}}
\newcommand{\Fc}{\mathcal{F}}
\newcommand{\Ff}{\mathfrak{F}}
\newcommand{\Hc}{\mathcal{H}}
\newcommand{\Jc}{\mathcal{J}}
\newcommand{\Lc}{\mathcal{L}}
\newcommand{\Pc}{\mathcal{P}}
\newcommand{\Qc}{\mathcal{Q}}
\newcommand{\Uc}{\mathcal{U}}
\newcommand{\Xc}{\mathcal{X}}
\DeclareMathOperator{\cov}{cov}
\DeclareMathOperator{\Var}{Var}
\DeclareMathOperator{\intt}{int}
\newcommand{\dto}{\xrightarrow[n\to\infty]{d}}
\newtheorem{proposition}{Proposition}[section]
\newtheorem{theorem}[proposition]{Theorem}
\newtheorem{corollary}[proposition]{Corollary}
\newtheorem{definition}[proposition]{Definition}
\begin{document}

\title{Central limit theorems for vector-valued composite functionals with smoothing and applications
\thanks{The support of the Office of Naval Research under grant 
N00014-21-1-216 is gratefully acknowledged.}
}

\author{Huihui Chen, Darinka Dentcheva, Yang Lin, and Gregory J. Stock\\
Stevens Institute of Technology, Hoboken, NJ, USA}

\maketitle

\textbf{Abstract:}

This paper focuses on vector-valued composite functionals, which may be nonlinear in probability. Our primary goal is to establish central limit theorems for these functionals when mixed estimators are employed.
Our study is relevant to the evaluation and comparison of risk in decision-making contexts and extends to functionals that arise in machine learning methods. A generalized family of composite risk functionals is presented, which encompasses most of the known coherent risk measures including systemic measures of risk. The paper makes two main contributions. First, we analyze vector-valued functionals, providing a framework for evaluating high-dimensional risks. This framework facilitates the comparison of multiple risk measures, as well as the estimation and asymptotic analysis of systemic risk and its optimal value in decision-making problems. Second, we derive novel central limit theorems for optimized composite functionals when mixed types of estimators: empirical and smoothed estimators are used. We provide verifiable sufficient conditions for the central limit formulae and show their applicability to several popular measures of risk.

\textbf{Keywords}
coherent measure of risk, stochastic programming, systemic risk

\section{Introduction}

In the area of machine learning, business, engineering, and others, optimization under uncertainty and risk are indispensable. A plenitude of literature addresses the properties of and efficient numerical approach to data-driven stochastic optimization problems. Recently, the methods of risk-averse optimization and learning have become a subject of increased interest and our paper aims to contribute to that area. 

Our main focus is placed on the following general functions.
\begin{equation}
\label{f:rho}
\varrho[X] =  \mathbb{E}\left[f_1 \left(\mathbb{E}[f_2(\mathbb{E}[\ldots f_k(\mathbb{E}[f_{k+1}(X)],X \right) \right] \ldots,X)],X)],
\end{equation}
where $X$ is a random vector defined on the probability space $(\Omega,\mathcal{F},\mathbb{P})$ with realizations in $\Xc\subseteq{\mathbb{R}}^m$.  The probability measure induced by the random vector $X$ is denoted by $P$.
The vector functions $f_j:{\mathbb{R}}^{m_j} \times {\mathbb{R}}^m \rightarrow {\mathbb{R}}^{m_{j-1}}$, $j=1,\cdots,k$ and $f_{k+1}:{\mathbb{R}}^m \rightarrow {\mathbb{R}}^{m_k}$ are assumed $P$-integrable with respect to their last argument and for $j=1,\dots, k$ they are continuous with respect to the first argument. 
The standard notation $\Lc_p(\varOmega,\mathcal{F},\mathbb{P};\Rb^m)$ stands for the set of $m$-dimensional random vectors defined on the probability space $(\varOmega,\mathcal{F},\mathbb{P})$  that are indistinguishable on sets of $\mathbb{P}$-measure zero and have finite $p$ moments.
We assume that $X\in\Lc_p(\varOmega,{\mathcal S},\mathbb{P};\Rb^m),$ with some $p\in[1,\infty]$.

Many coherent measures of risk may be cast in this form. In \cite{dentcheva2017statistical}, we have shown that the mean-semi deviations measures of order $p\geq 1$,  the Average Value at Risk at level $\alpha\in(0,1]$, as well as the higher-order measures of risk  can be represented as (optimized) composite functionals of form \eqref{f:rho} with $u$ being a decision vector when optimization of risk control is involved. For more information on these measures, we refer to \cite{Ogryczak_2001,Ogryczak_2002,ROCKAFELLAR20021443,krokhmal2007higher}. A comprehensive treatment of optimization models with risk measures is provided in \cite{mainbook,Dentcheva2024}.
Furthermore, problems in other areas, such as machine learning, deal with composite optimization as well \cite{machine1,nest3}. 

Suppose a sample $X_1, X_2,\dots, X_n$ of $n$ independent realizations of the
random vector $X$ is available. We are interested in the case when we need to use the entire sample for estimating the expected values at all levels. This need arises when we do not have a large sample for the given dimension $m$ of $X$ and sampling is expensive or difficult.
The empirical estimator of the composite risk functional is the following
\begin{equation}
\label{f:rhoN}
\varrho^{(n)}[X]= \sum_{i_0=1}^n\frac{1}{n}\Big[f_1\Big(\sum_{i_1=1}^n\frac{1}{n}\big[f_2\big(\sum_{i_2=1}^n\frac{1}{n}[\cdots  f_k(\sum_{i_k=1}^n\frac{1}{n}f_{k+1}(X_{i_k}),X_{i_{k-1}})]
\cdots,X_{i_1}\big)\big],X_{i_0}\Big)\Big]\qquad{~}
  \end{equation}
When $m_0=1$, we shall consider optimizing a risk measure. In that case, we shall assume that all functions depend additionally on a decision variable $u$ and we solve the following problem. 
\begin{equation}\label{eq_2}
\vartheta[X]= \min_{u \in U} \varrho[u,X],
\end{equation}
assuming that $U$ is closed convex set in $\Rb^d$. Let $\mathcal{S}$ be the set of optimal solutions of problem \eqref{eq_2}. Our assumptions will guarantee  that $\mathcal{S} \not=\emptyset.$
When the risk measure is estimated based on a sample, then the optimal value 
becomes an estimator itself.

In our study, we intend to pursue the analysis of \emph{vector-valued} composite functionals with the use of various estimators beyond empirical ones and to provide analysis that are novel also for the univariate case. The need to introduce vector-valued functionals arises in several contexts. 
First, in the context of investment decisions, 
we may want to compare the riskiness of two different investment portfolios based on observed data, which may or may not come from the same basket of securities. Furthermore, we may need to compare the risk of the two portfolios with respect to several measures of risk, which reflect the preferences of multiple investors.
A very important case, when we need to use vector-valued risk functionals is when evaluating and optimizing a complex stochastic system. In these situations, we deal with systemic risk, which is very essential in both financial as well as engineering, logistics, medical, and many other problems. In those applications, the decision maker deals with complex distributed systems, where each component (unit, or agent) has its own risk of operation.  
It is well-known that the risk is not additive and various risk aggregation methods are suggested in the literature to reflect the risk of the entire system; the latter is termed \emph{systemic risk}. Most practical approaches to systemic risk evaluation are based on (weighted) linear or nonlinear aggregations of risks of the system's components.  The decision problems optimizing such complex distributed systems incorporate measures of risk for each agent (unit), as well as a measure of systemic risk associated with a common task, integrity of the system, etc. In \cite{almen2023risk}, these aggregation methods are discussed and it is shown that they are in-line with an axiomatic foundation of high-dimensional risks. The statistical evaluation of the systemic risk as well as the need to work with more than one measure of risk leads to the vector-valued setting, which we discuss in this work. 

Given the observations of our earlier work \cite{dentcheva2021bias,dentcheva2022stability}, we
aim at analyzing the properties of the smoothed estimators, as well as the mixed smoothed and empirical ones, at a deeper level and address some of the nested expected value functionals for the vector-valued case. In this paper, we establish central limit formulae which are novel for smoothed and mixed estimators for both scalar-valued and vector-valued composite functionals. We underline that our setting has the flexibility to employ also other estimators across different layers using the entire sample; our findings are not tied to a single estimator. Smoothing is a widely used technique both in the context of statistical estimation as well as in the context of randomized optimization methods (e.g., \cite{duchi2012randomized}, \cite{Ermoliev1995}). We have also shown that smoothing has the potential for bias reduction in stochastic optimization (\cite{dentcheva2021bias,dentcheva2022stability}).  The results in this paper may further the convergence analysis of such techniques.  


Finally, we apply our technique and results to compare differences in risk measures and estimate systemic risk in various forms.

Our paper is organized as follows. Section \ref{s:framework} introduces the foundational framework and presents an overview of the existing univariate central limit theorems pertaining to empirical estimators for composite risk functionals. 
We also define the notion of strong approximate identity, which is germane to the analysis of the smoothed estimators. We pay particular attention to the kernel estimators. Section \ref{s:CLT} contains a central limit formula for general smoothed estimators of scalar-valued composite risk functionals, which is based on verifiable assumptions without assuming the boundedness of the functions involved, or non-negativity of the smoothing measures. Additionally, we show that the kernel estimators represent a strong approximate identity under very mild conditions. We further extend these results to vector-valued composite risk functionals and examine two compelling applications. Section \ref{s:simulation} contains the results of our simulation study, which compares the kernel estimator against the empirical estimator to gauge the precision of our approximation. Finally, Section \ref{s:conclusions} summarizes our conclusions.

\section{Framework and Preliminaries}
\label{s:framework}
Let $\Pc(\Xc)$ be the set of all probability measures on the set $\Xc$.
 The following functions and sets will play a role in our discussion. For a measure $Q\in\Pc(\Xc)$, we define
\begin{equation}
\label{f:barfj-u-mu}
\begin{aligned}
     \bar{f}_j^Q(\eta_j) & =\int_{\mathcal{X}}f_j(\eta_j,x)\,{Q}(dx), \quad j=1,\cdots,k, \\
     \bar{\eta}^Q_{k+1} &  =\int_{\mathcal{X}}f_{k+1}(x)\,{Q}(dx),\\
     \bar{\eta}_{j}^Q & =\bar{f}_j^Q(\bar{\eta}_{j+1}^Q), \quad j=1,\cdots,k.
\end{aligned}
\end{equation}
If $Q=P$, i.e., the expectation is exact, not approximated by using measure $Q$, then the superscript will be omitted. 
Notably, different estimators could be employed across different layers. These encompass not only empirical or various smoothed estimators but also their amalgamation to yield more refined estimation results. The composite risk function is expressed through a combination of multiple estimators, as follows:

\begin{equation}\label{multi_est}
\varrho^{(\Qc)}[X] = f_{1}^{Q^1} \left(f_{2}^{Q^2}(\ldots f_{k}^{Q^k}(f_{k+1}^{Q^{k+1}}(X),X )\ldots,X),X)\right)
\end{equation}
Here, $\Qc=(Q^1, Q^2, \dots, Q^{k+1})$ denotes a set of distinct estimators of the respective expected values across various layers. 
We shall denote the convergence in distribution by the symbol $\xrightarrow[]{d}.$

We fix compact sets $I_1\subset\Rb^{m_1},\cdots,I_k\subset\Rb^{m_k}$ such that $\bar{f}_{j+1}(I_{j+1}) \subset \intt(I_j)$, $j=1,\cdots,k-1$, and $\bar{f}_{k+1}\subset \intt(I_k)$, where $\intt(I_j)$ stands for the interior of $I_j.$ Without loss of generality, we assume that $I_j$, $j=1,\dots k$ are convex sets. We define the space:
\begin{equation*}
    \mathcal{H}={\mathcal{C}}_1(I_1) \times {\mathcal{C}}_{m_1}( I_2) \times \cdots \times {\mathcal{C}}_{m_{k-1}}(I_k) \times \Rb^{m_k}.
\end{equation*}
where  ${\mathcal{C}}_{m_{j-1}}$ is the space of ${\mathbb{R}}^{m_{j-1}}$-valued continuous function on $I_j$, equipped with the supremum norm. The space $\Hc$ is equipped with the product norm.
We set $I = I_1\times I_2\times\cdots I_k$ and $M=m_0+m_1+\dots+m_k$. Let the vector-valued function
$
f: I \times \Xc \to \Rb^M
$
have block coordinates
$f_j(\eta_j,x)$, $j=1,\dots,k$, and $f_{k+1}(x)$. Similarly, we define $\bar{f}^\Qc: I  \to \Rb^{M}$ with block
coordinates $\bar{f}_j^{Q^j}(\eta_j)$, $j=1,\dots,k$, and $\bar{\eta}^Q_{k+1}$.

Recall that a function $g$ is Hadamard-directionally differentiable at a point $x$ in a direction $d$ if for all sequences $\{ d^k\}_{k=1}^\infty$ and $\{ t_k\}_{k=1}^\infty$ such that
$\lim_{k\to\infty} d^k = d$ and $t_k>0$, $\lim_{k\to\infty} t_k =  0$, the limit 
\[
\lim_{k\to\infty} \frac{1}{t_k} \big( g(x+t_k d^k) - g(x)\big) = g'(x;d)
\]
exists and is well-defined. Compositions of Hadamard-directionally differentiable functions are Hadamard-directionally differentiable. In our setting
for every direction $d = (d_1,\dots,d_{k},d_{k+1}) \in\Hc$, we define recursively the sequence of vectors:
\begin{equation}
\label{recursive0}
\begin{gathered}
\xi_{k+1}(d) = d_{k+1},\\
\xi_{j}(d) = \int_{\Xc} f'_{j}\big(\bar{\eta}_{j+1},x;\xi_{j+1}(d)\big)\,P(dx) + d_{j}\big(\bar{\eta}_{j+1}\big),\quad j=k,k-1,\dots,1.
\end{gathered}
\end{equation}
A Central Limit Theorem for the plug-in estimator of a univariate composite risk functional is proved in \cite{dentcheva2017statistical}. 
For the risk-averse optimization problem of form \eqref{eq_2}, the counterpart central limit formula is established in \cite[Theorem 3]{dentcheva2017statistical} assuming that only empirical estimators are used.

We pay special attention to estimators that are obtained by convolution of the empirical measure $P_n$ with a measure $\mu_n$. One of our goals is to obtain a central limit formula for this type of estimators when they are used in compositions. 

A comprehensive examination of the asymptotic behavior of smoothed empirical processes is available in \cite{Aad_1994}, \cite{yukich_1992}, and \cite{YUKICH1989163}. Thorough explorations of the one-dimensional functional central limit theorem for smoothed empirical processes, assuming a uniformly bounded class of functions or invariance under translation (which in turn implies uniform boundedness) are extensively covered in \cite{Einmahl2005}, \cite{Gine2008}, and \cite{Radulovic2003}. The requirement of uniform boundedness is relaxed in \cite{Peter1998} and \cite{Rost1} by assumptions about the entropy and the tail behavior for smoothed estimators. These assumptions are quite involved in may not be easy to verify. In our context, we need to further propagate the relevant properties through the compositions which form the risk functionals.  To resolve these issues, we offer sufficient conditions, which are verifiable and allow for the extension of the analysis to vector-valued composite stochastic optimization problems. Our starting point is our earlier work \cite{dentcheva2022stability}, where we have considered several smoothed estimators for the composite risk functionals and have shown their consistency under relatively mild assumptions. 

We choose a sequence of measure $\{\mu_n\}_{n=1}^\infty$, which are independent of the empirical measure $P_n$, and throughout the entire paper, we assume that all measures $\{\mu_n\}$ are normalized, i.e. $\mu_n(\Rb^m)=1$. The following notion will be used.
\begin{definition}
The sequence of measures $\{\mu_n\}$ is called  a \emph{proper approximate convolutional identity of order $p$}, $p\geq 1,$ if it converges weakly to the point mass $\delta(0)$ when $n\to\infty$ and the integrals $\int_{\Rb^m} \|z\|^p \,d\mu_n(z)$ are finite for all $n\in\Nb$. 
\end{definition}
The kernel estimators of the following form constitute a special case:
\begin{equation*}
    \frac{1}{nh_n^m}\sum_{i=1}^n \int_{\Rb^m} g(x) K\Big(\frac{x-X_i}{h_n}\Big)\, dx
\end{equation*}
where $K$ is a $m$-dimensional density function with respect to the Lebesgue measure and $h_n>0$ is a smoothing parameter such that $\lim_{n\to\infty} h_n=0$. We have
 $d\mu_n(x) = \frac{1}{h_n^m} K\Big(\frac{x}{h_n}\Big)\,dx$. 
The estimators $\mu_n$ may take a more general form than the kernel estimator just defined for illustration (cf. \cite{tsybakov2008introduction,gine2016mathematical}).
When using kernels, we shall assume the following properties.
\begin{itemize}
\item[(k1)] The kernel $K$ of order $s>1$ is a density function with respect to the Lebesgue measure satisfying the symmetry condition 
$\int\limits_{\Rb^m} y_l^j K(y)dy=0$ for $l=1,\cdots,m$, $j=1,\dots, \lfloor s\rfloor$ with $\lfloor s\rfloor$ being the largest integer smaller than $s.$
\item[(k2)]  The $p$-th order moment of the kernel: 
$m_p(K) = \int\limits_{\Rb^m}\|y\|^p K(y)dy$,  is finite.
\end{itemize}

For illustration, in the case of $k=1$, we deal with a functional with two layers. We could use the empirical estimator as $Q^1_n=P_n$ in the outer layer  and a kernel estimator as $Q^2_n$ in the inner layer. In that case, the two-layer estimation based on a finite sample of size $n$ and $\Qc=(Q^1_n,Q^2_n)$ is represented as follows:
\begin{align*}
      \varrho^{(\Qc)}[X]& =f_{1}^{Q^1_n}\left(f_{2}^{Q^2_2}(X),X\right)\\
      &=\sum_{i_0=1}^n\frac{1}{n}\Big[f_1\Big(\frac{1}{nh_n^{m_1}}\sum_{i_1=1}^n \int_{\Rb^{m_1}} f_2(x) K\Big(\frac{x-X_{i_1}}{h_n}\Big)\, dx, X_{i_0})\Big]
\end{align*}

In order to avoid cluttering the notation, we shall omit the area of the integration when it does not lead to ambiguity.  We shall denote the index set $\Jc =\{1,2,\dots,k+1\}$ and the set $J\subseteq\Jc$ shall contain all indices of the composition level, where smoothing is applied. We use the notation $\varrho_\mu^{(n,J)}$ for the estimator, in which $Q_n^j = P_n *\mu_n$ for $j\in J$ and $Q_n^j = P_n$ for $j\in \Jc\setminus J.$

\section{Central limit theorems for mixed smoothed and empirical estimators}
\label{s:CLT}

\subsection{Scalar-valued composite risk functionals}

In this section, we establish a central limit theorem for univariate composite risk functionals and their optimized version.
We define the following sets of continuous functions associated with the risk functional \eqref{f:rho}:
\begin{align*}
\mathfrak{F}_j\,&= \Big\{  f_{j,i}(\eta_j,\cdot): \Xc\rightarrow \Rb, \;
 i=1,\dots m_{j-1},\;\; \eta_j\in I_j\; \Big\}  \quad  j=1,\dots, k; \\
\mathfrak{F}_{k+1}\;& = \Big\{ f_{k+1,i}(\cdot): \Xc\rightarrow \Rb, \;\; i=1,\dots,m_{k+1},\; \Big\} 
\quad \mathfrak{F} =\cup_{1\leq j\leq k+1} \mathfrak{F}_j. 
\end{align*}
The associated envelope functions is given by
\[
F_j(x)= \sup_{\eta_j \in I_j,\; 1\leq i\leq m_{j-1}} | f_{j,i} ( \eta_j, x) |,\quad F_{k+1}(x)= \sup_{1\leq i\leq m_{j-1}} | f_{k+1,i} (x) |
\]
Due to the compactness of $I_j$, the functions $F_j(\cdot)$, $j\in\Jc$ are well-defined and they are also measurable (\cite[Theorem 7.42]{mainbook}). 
We do not assume $\Ff$ is a translation invariant class, which is a common assumption in the 
literature (\cite{Aad_1994},\cite{tucl} and \cite{yukich_1992}) because this assumption automatically implies that $\Ff$ is uniformly bounded, which would limit substantially the intended application of our results. 

We recall the notions of covering and bracketing numbers. 
The \emph{covering number} $\mathcal{N} (\varepsilon, \mathfrak{F}, \| \cdot \|)$ is the minimal number of balls $\{ g,f:\Rb^m : \| g - f \| < 2\varepsilon \}$ of radius $\varepsilon$ needed to cover the set $\mathfrak{F}$. 

  Given two functions $l:\Rb^m\to \Rb$ and $u:\Rb^m\to \Rb$, the bracket $[l, u]$ is
  the set of all functions  $g:\Rb^m\to \Rb$ with $l \leq g \leq u$. An $\varepsilon$-bracket is a bracket $[l, u]$ with $\| u - l \| < \varepsilon$. The bracketing number
  $\mathcal{N}_{[\,]} (\varepsilon, \mathfrak{F}, \| \cdot \|)$ is the minimum number of $\varepsilon$-brackets needed to cover $\mathfrak{F}$.

Suppose that the envelope functions satisfy $PF_j<\infty$ for $j=1,\dots k+1.$
Then the classes $\mathfrak{F}_j$, $j = 1, \ldots, k+1$ have finite bracketing
numbers $\mathcal{N}_{[\,]} (\varepsilon,\mathfrak{F}_j, \Lc_1 (P))$ for all $\varepsilon >0$.
This implies that $\mathfrak{F}$ is a $P$-Glinvenko-Cantelli class, i.e. 
\begin{equation}
\label{Glinvenko-Cantelli}
\|P_nf-Pf\|=\sup_{\eta \in  I}|P_nf-Pf| \xrightarrow[n\to\infty]{a.s.} 0.
\end{equation}
 We propose the following central limit theorem for smoothed composite vector-valued functions and provide more details for its covariance. 

\begin{theorem}\label{smoothclt}
  Suppose an index set $J\subseteq\{1, \ldots, k+1\}$ is fixed, $m_0 = 1$, and the following conditions are satisfied:
  \begin{enumerate}
     \item[(a1)] The sequence of normalized measures $\{ \mu_n \}_{n=1}^\infty$ converges weakly to the point mass at zero.
    \item[(a2)] For all $j\in\Jc$,  $\int_{\mathcal{X}} \|F_j(x)\|^2 P(d x)$ is finite and for all $j\in J$ and $x \in \mathcal{X}$, $\int_{\mathbb{R}^m} \|F_j(x+y)\|^2 \mu_n (dy) < \infty$ is finite. 
    \item[(a3)] Functions $\tilde{F}_j:\Xc\to \Rb$, $j\in\Jc$ with a finite $\Lc_2 (P)$ norm exists such that for all $\eta,\eta'\in I_j$ and all $x\in\Xc$
\begin{gather*}
\|f_j(\eta,x)-f_j(\eta',x)\| \leq \|\eta -\eta'\|\tilde{F}_j(x),\quad j=1,\dots, k. 
\end{gather*}
    \item[(a4)]
    The following two conditions are satisfied for all $j\in J$:
    \begin{gather}\label{eq.15}
      \sup\limits_{f \in \mathfrak{F}_j}\; \mathbb{E} \Big[ \Big(
      \int_{\mathbb{R}^m} \big(f (\eta, X + y) - f(\eta, X)\big) \;
      \mu_n (dy) \Big)^2 \Big]  \xrightarrow[n \rightarrow \infty]{} 0;
      \\
    \label{eq.16}
      \underset{f \in \mathfrak{F}_j}{\sup} \; \sqrt{n} 
      \Big| \mathbb{E} \Big[ \int_{\mathbb{R}^m} \big( f (\eta, X + y) -f (\eta, X) \big) \;\mu_n (dy)  \Big] \Big| \xrightarrow[n \rightarrow \infty]{} 0.
    \end{gather}
    \item[(a5)] The functions $f_j (\cdot, x)$, $j=1,\dots, k$ are Hadamard directional differentiable for all $x \in \mathcal{X}$ and for $j\in J$ their directional derivatives are $\mu_n$-integrable for all directions $d$.
  \end{enumerate}
 Then 
$\displaystyle{\sqrt[]{n}\big( \varrho_{\mu}^{(n,J)}[X]-\varrho[X]\big) \xrightarrow{d} \xi_1(G)}$,
where $G(\cdot)=(G_1(\cdot),\ldots,G_k(\cdot),G_{k+1})$ is zero-mean Brownian process on $I$. Here $G_j(\cdot)$ is a Brownian process of dimension $m_{j-1}$ on $I_j$, $j=1,\ldots,k$, and $G_{k+1}$ is an $m_k$-dimensional normal vector. The covariance function of $G$ has the following form:
\begin{equation} \label{eq__12}
\begin{split}
& \cov[G_i(\eta_i),G_j(\eta_j)]=\int_{\mathcal{X}}[f_i(\eta_i,x)-\bar{f}_i(\eta_i)][f_j(\eta_j,x)-\bar{f}_j(\eta_j)]^{\top} {P}(dx) \\
& \qquad\qquad\qquad\qquad\qquad\qquad \eta_i \in I_i, i=1,\ldots,k, \\
&	\cov[G_i(\eta_i),G_{k+1}]=\int_{\mathcal{X}}[f_i(\eta_i,x)-\bar{f}_i(\eta_i)][f_{k+1}(x)-\bar{\eta}_{k+1}]^{\top} {P}(dx) \\
&	\cov[G_{k+1},G_{k+1}]=\int_{\mathcal{X}}[f_{k+1}(x)-\bar{\eta}_{k+1}][f_{k+1}(x)-\bar{\eta}_{k+1}]^{\top} {P}(dx)
\end{split}   
\end{equation}
\end{theorem}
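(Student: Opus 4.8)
The plan is to view $\varrho^{(n,J)}_\mu[X]$ as the image of a Hadamard directionally differentiable composition map applied to a vector of estimated inner integrals, and to combine a functional central limit theorem for that vector with the functional delta method. Collect the estimated layers into the element $\bar f^{\Qc_n}=(\bar f_1^{Q_n^1},\dots,\bar f_k^{Q_n^k},\bar\eta_{k+1}^{Q_n^{k+1}})\in\Hc$, where $Q_n^j=P_n*\mu_n$ for $j\in J$ and $Q_n^j=P_n$ for $j\in\Jc\setminus J$, and let $\Phi:\Hc\to\Rb$ denote the composition map that sends $\bar f^{\Qc_n}$ to $\varrho^{(n,J)}_\mu[X]$, so that $\varrho[X]=\Phi(\bar f)$. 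The central task is to prove the functional CLT
\[
\sqrt n\,\big(\bar f^{\Qc_n}-\bar f\big)\xrightarrow{d} G \quad\text{in } \Hc,
\]
with $G$ the zero-mean Gaussian process whose covariance is \eqref{eq__12}; the conclusion then follows by the delta method.

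For the unsmoothed coordinates $j\in\Jc\setminus J$, I would invoke the classical Donsker theorem: condition (a2) supplies $PF_j^2<\infty$, which together with the finiteness of the bracketing numbers $\mathcal{N}_{[\,]}(\varepsilon,\Ff_j,\Lc_1(P))$ noted before the statement upgrades $\Ff_j$ from Glivenko--Cantelli to $P$-Donsker, so $\sqrt n(P_nf-Pf)$ converges to the Brownian process $G_j$ with the stated $P$-covariance. For the smoothed coordinates $j\in J$, write $f_{\mu_n}(\eta,x)=\int f(\eta,x+y)\,\mu_n(dy)$ and decompose
\[
\sqrt n\big(\bar f_j^{P_n*\mu_n}(\eta)-\bar f_j(\eta)\big)
=\sqrt n\,(P_nf_{\mu_n}-Pf_{\mu_n})(\eta)
+\sqrt n\,(Pf_{\mu_n}-Pf)(\eta).
\]
The deterministic bias term is forced to zero uniformly by \eqref{eq.16}, while the stochastic term is treated by subtracting and adding the unsmoothed empirical process: the remainder $\sqrt n\,(P_n-P)(f_{\mu_n}-f)$ is an empirical process indexed by the difference class, whose $\Lc_2(P)$ radius tends to zero by \eqref{eq.15}, so by asymptotic equicontinuity it is negligible. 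Hence each smoothed coordinate has the same Gaussian limit $G_j$ as its empirical counterpart, and in particular smoothing leaves the limiting covariance unchanged, which is why only the original functions $f_i,f_j$ and the measure $P$ enter \eqref{eq__12}.

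Joint convergence in the product space $\Hc$ follows because every coordinate process is built from the same sample: the finite-dimensional distributions converge by a triangular-array central limit theorem (using (a2) for the Lindeberg condition), marginal asymptotic equicontinuity has just been established layer by layer, and the cross-covariance between layers $i$ and $j$ is the $P$-covariance of $f_i(\eta_i,\cdot)$ and $f_j(\eta_j,\cdot)$ since the smoothing perturbations are asymptotically negligible in $\Lc_2(P)$. Finally, $\Phi$ is Hadamard directionally differentiable as a composition of Hadamard directionally differentiable maps (condition (a5) guarantees the inner directional derivatives exist and are $\mu_n$-integrable, so the recursion is well defined and passes to the limit), and a direct computation shows its directional derivative at $\bar f$ in direction $d$ is exactly $\xi_1(d)$ from \eqref{recursive0}. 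The functional delta method for Hadamard directionally differentiable maps then yields $\sqrt n(\varrho^{(n,J)}_\mu[X]-\varrho[X])=\sqrt n(\Phi(\bar f^{\Qc_n})-\Phi(\bar f))\xrightarrow{d}\xi_1(G)$.

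The main obstacle I anticipate is the smoothed layer: without the usual translation-invariance or uniform-boundedness hypotheses, one cannot simply quote an off-the-shelf smoothed-empirical-process CLT, and the index class $\{f_{\mu_n}-f\}$ changes with $n$. The delicate point is to show that this triangular-array difference process is asymptotically equicontinuous, which is where the $\Lc_2(P)$-negligibility in \eqref{eq.15} must be combined with the Lipschitz and envelope control from (a2)--(a3) to keep the bracketing entropy of the difference class bounded uniformly in $n$; cleanly separating this stochastic negligibility from the $\sqrt n$-scaled bias governed by \eqref{eq.16} is the crux of the argument.
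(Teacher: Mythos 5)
Your proposal is correct and follows essentially the same route as the paper: a layer-wise functional CLT for the vector of estimated inner integrals, combined with Hadamard directional differentiability of the composition map and the functional delta method, with (a4) killing the bias and the $\Lc_2(P)$-perturbation of the smoothed layers so that the limiting covariance is the unsmoothed one in \eqref{eq__12}. The only difference is one of packaging: where you inline the bias/difference-class decomposition of the smoothed empirical process and flag the equicontinuity of the triangular-array difference class as the crux, the paper discharges exactly that step by verifying uniformly integrable entropy of the classes $\mathfrak{F}_j$ (via the Lipschitz condition (a3) and covering numbers of the compact index sets $I_j$) and then citing an existing smoothed-empirical-process CLT whose hypotheses are precisely (a1), (a2), and (a4).
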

\begin{proof}
We shall show that the classes $\Ff_j$ with $j\in J$ have uniformly integrable entropy. 
To this end, it is sufficient to show that for some constant $V$ and all $\varepsilon\in(0,1)$, the following bound holds:
    \[ 
    \underset{Q}{\sup}\; \mathcal{N} (2\varepsilon\|\tilde{F}\|_{\Lc_2(Q)},
       \mathfrak{F}_j, \Lc_2 (Q)) \leq K \left( \frac{1}{\epsilon} \right)^V.
       \]
    Here the supremum is taken over all finitely discrete probability measures
    $Q$ on $\mathcal{X}$ such that  
    \[
    \| \tilde{F} \|_{2, Q} = \int \tilde{F}^2(x)\; Q(dx) > 0.
    \]
Using assumption (a3), we observe that for any norm $\|\cdot\|$ such that $\|\tilde{F}\|< \infty$,  the following bound hold:
\begin{equation}
\label{bracketbound}
\mathcal{N}_{[\,]} \big(2\varepsilon\|\tilde{F}\|,\mathfrak{F}_j, \|\cdot\|\big) \leq N(\varepsilon, \Uc\times I,\|\cdot\|_{\Rb} ),
\end{equation}
where $N(\varepsilon, I,\|\cdot\|_{\Rb} )$ denotes the minimal number of balls with radius $\varepsilon$ that are necessary to cover $I$. 
Indeed, let $\eta_\ell$, $\ell = 1,\dots,\ell_\varepsilon$ with $\ell_\varepsilon = N(\varepsilon, I,\|\cdot\|_{\Rb} )$ form an $\varepsilon$-net on $I$, that is 
the closed balls with centers $\eta_\ell$ and radius $\varepsilon$ cover $I$. 
Then the brackets 
\[
\big[ f_{ij}(\eta_\ell,\cdot) -\varepsilon \tilde{F}_j(\cdot), f_{ij}(\eta_\ell,\cdot) +\varepsilon \tilde{F}(\cdot) \big]
\] 
cover $\mathfrak{F}_j$, and they are of size at most $2\varepsilon\|\tilde{F}\|$. 
According to Lemma 2.7.8 in \cite{van1996}, there is an universal bound for the logarithm of the covering number of all compact convex subsets of $I_j$ given by $K\left(\frac{1}{\varepsilon}\right)^{\frac{m_j-1}{2}}$ with a constant $K>0$, which depend only on the volume and dimension of $I_j$. 
Furthermore, since the covering numbers are smaller than the bracketing numbers, we have 
    \[
    \log \mathcal{N} \left( 2\varepsilon \| \tilde{F}_j\|_{2, Q}, \mathfrak{F}_j, \Lc_2 (Q) \right) 
       \leq \log \mathcal{N}_{[\,]}(2\varepsilon \| \tilde{F}_j \|_{2, Q}, \mathfrak{F}_j, \Lc_2(Q))
        \leq K \left( \frac{1}{\epsilon} \right)^{\frac{m_j-1}{2}}\hspace{-6ex}
    \]  
This shows that the functions in $\Ff$ have uniformly integrable entropy. This together with conditions (a1), (a2) and (a4) implies that the assumptions of \cite[Theorem 2.2]{Rost} are satisfied. Note that we do not require $F_j(\cdot)$ to be included in $\mathfrak{F}_j$ as in \cite[Theorem 2.2]{Rost} because (a2) insures the necessary integrability. Hence, the class of functions $\mathfrak{F}_j$ is $P \ast \mu_n$ - Donsker.
The classes $\Ff_j$ with $j\in \Jc\setminus J$ are $P$-Donsker due to assumption (a3). 
This entails that
  \begin{gather*}
  \sqrt{n} [(P_n \ast \mu_n) f_j - P f_j] \overset{d}{\rightarrow} G^P_j \quad j\in J,\\
 \sqrt{n}\quad  [P_nf_j - P f_j] \overset{d}{\rightarrow} G^P_j,\quad j\in\Jc\setminus J.
     \end{gather*}
 where ${G}^{P}_j$, $j\in\Jc$ is a standard Brownian process with zero-mean and covariance function 
\begin{equation} \label{eq__13}
\cov[G({\eta}^{'},{\eta}^{''})]=\int_{\mathcal{X}}[f_j(\eta^{'},x)-\bar{f}_j(\eta^{'})][f_j(\eta^{''},x)-\bar{f}_j(\eta^{''})]^\top {P}(dx)
\end{equation}
We define a subset $H$ of the space $\Hc$ containing all elements $(h_1,\dots,h_k, h_{k+1})$ for which\\
$h_{j+1}(h_{j+2}(\cdots h_k(h_{k+1}) \cdots ))\in~I_j$, $j=1,\dots,k$. 
Further, we define the operator $\Psi: H \to \Rb$ as follows
\[
\varPsi(h) = h_1\Big(h_2\big(\;\cdots h_k(h_{k+1})\;\cdots\big)\Big).
\]
Consider the perturbation function $h_\mu^{(n,J)}(\eta)$ as follows: its $j$-th component is given by
\begin{equation}
\label{e:hmu}
[h_\mu^{(n, J)}]_j(\eta)=
\begin{cases}
\frac{1}{n}\sum_{i=1}^{n}\int f_j(\eta,X_i+ z) d\mu_n(z) &\text{ if } j\in J, j=1,\dots, k,\\
\frac{1}{n}\sum_{i=1}^{n} f_j(\eta,X_{i}) &\text{ if } j\not\in J, j=1,\dots, k,\\
\frac{1}{n}\sum_{i=1}^{n}\int f_{k+1}(X_{i}+ z) d\mu_n(z) &\text{ if } k+1\in J,\\
\frac{1}{n}\sum_{i=1}^{n} f_{k+1}(X_{i}) &\text{ if } k+1\not\in J.\\
\end{cases}
\end{equation}
We define the functional $\Psi:\Hc \to\Rb$ by setting
\[
\Psi(h) = h_1\Big(h_2\big(\cdots h_k(h_{k+1})\cdots\big)\Big)
\]   
The Hadamard-directional differentiability of $\Psi(\cdot)$ at $\bar{f}$ is needed in order to apply the delta method. 

Note that $\bar{f}$ is an element of $\mathcal{H}$  and it is also an interior point of $H$ due to assumption (a3). 
The perturbations $h^{(n)}_\mu(\cdot)$ are in the space $\mathcal{H}$ as well. We need them to be Hadamard-directionally differentiable. To this end, we only need consider the components with index $j\in J$ and observe that it is sufficient to argue that $\int f_j(\cdot, X_i+z) \mu_n(dz)$ is  Hadamard-directionally differentiable. Using the definition, we have
\begin{multline*}
\lim_{\substack{\tau \downarrow 0 \\ d \rightarrow \xi_{j+1}}} \frac{1}{\tau}\int \big(f_j(\bar{\eta}+\tau d, X_i+z)-f_j(\bar{\eta}, X_i+z) \big) \;\mu_n(dz) =\\
\int \lim_{\substack{\tau \downarrow 0 \\ d \rightarrow \xi_{j+1}}} \frac{1}{\tau} \big(f_j(\bar{\eta}+\tau d, X_i+z)-f_j(\bar{\eta}, X_i+z) \big) \;\mu_n(dz) 
        = \int f_{j}^{'}(\bar{\eta},X_i+z;\xi_{j+1})\; \mu_n(dz).
\end{multline*}
We could take the limit under the integral by virtue of the Lebesgue dominated convergence theorem due to assumption (a2). The integral on the right hand side is finite by virtue of assumption (a5).

 An explicit formula of the Hadamard-differentiable derivative $\Psi_k^{'}(h;d)$ can be derived as follows.   Consider $ \Psi_k(h)=h_k(h_{k+1}(u))$ and $h \in \intt(\mathcal{H})$. For $l \rightarrow \infty$, $d^l=(d^l_1,\ldots,d_k^l,d_{k+1}^l) \rightarrow d \in \mathcal{H}$. Then if $t_l \rightarrow 0$, $l \rightarrow \infty$, we have
    	\begin{equation}
         \begin{split}
           \Psi_k^{'}(h;d)
          &= \lim_{l \rightarrow \infty} \frac{1}{t_l}[\Psi_k((h_k+t_l d_k^l),(h_{k+1}+t_l d_{k+1}^l))-\Psi_{k}((h_k),(h_{k+1}))] \\
         &=  \lim_{l \rightarrow \infty } \frac{1}{t_l}[[h_k+t_l d_k^l](h_{k+1}+t_l d_{k+1}^l)-h_k(h_{k+1})]\\
         &=\lim_{l \rightarrow \infty} \frac{1}{t_l}[h_k(h_{k+1}+t_l d_{k+1}^l)-h_k(h_{k+1})+t_l d_k^l(h_{k+1}+t_l d_{k+1}^l)]\\
         &=\lim_{l \rightarrow \infty} \{ \frac{1}{t_l}[h_k(h_{k+1}+t_l d_{k+1}^l)-h_k(h_{k+1})]+ d_k^l (h_{k+1}+t_l d_{k+1}^l)  \} \\
         &= h_k^{'}((h_{k+1}),d_{k+1})+d_k(h_{k+1})
        \end{split}
    	\end{equation}
The chain rule entails that  $\Psi_{k-1}^{'}(h;d)=h_{k-1}^{'}(\Psi_k(h),\Psi_k^{'}(h;d))+d_{k-1}(\Psi_k(h))$.
This fact allows us to use the delta theorem presented in \cite{delta} to transfer the convergence of $h^{(n,J)}_{\mu}$ to the convergence of $\Psi(h^{(n,J)}_{\mu})=\varrho_{\mu}^{(n,J)}$.
The Delta Theorem \cite{delta}, the Donsker property, and the Hadamard directional differentiability of $\varPsi(\cdot)$ at $\bar{f}$
imply that
 \begin{equation}
     \sqrt{n}[\varrho^{(n,J)}_{\mu}-\varrho]=\sqrt{n}[\Psi(h^{(n,J)}_{\mu})-\Psi(\bar{f})] \xrightarrow{d} \Psi^{'}(\bar{f},G)
 \end{equation}
The covariance structure \eqref{eq__12} of $G$ follows directly from (\ref{eq__13}).
\end{proof}
Relations \ref{eq.15} and \ref{eq.16} in Theorem~\ref{smoothclt} play a crucial role in obtaining a central limit theorem. We provide a sufficient condition that may be easier to verify in the context of composite functionals.
To this end, we introduce the following notion. 
\begin{definition}
\label{d:strong_aprox_identity}
We call a proper approximate convolution identity $\{\mu_n\}$ a \emph{strong approximate identity of order} $p\geq 1$, if 
\begin{equation}\label{cond(c)}
 \sqrt{n} \lim\limits_{n\to\infty}  \int_{\Rb^m} \max\big(\|z\|,\|z\|^{p}\big)\; d \mu_{n}(z) =0.  
 \end{equation}
\end{definition}
Observe that when \eqref{cond(c)} is satisfied, then \cite[Definition 6.8 (i)]{villani2009optimal} implies that $\mu_n$ converges to $\delta(0)$ in the sense of the mass transportation distance of order $p$. However, \eqref{cond(c)} provides also a rate for that convergence. 
\begin{theorem}
\label{t:sufficient_cond}
Assume that for all $j\in J$ the following locally Lipschitz condition is satisfied for all function $f \in \mathfrak{F}_j$. Constants $C_j>0$ exists such that 
  \begin{equation}
  \label{f_j-Lipschitz}
  |f(\eta, x+y)- f(\eta, x) |\leq \bar{C}_j(x,y)\|y\| \quad \text{for all } x\in\mathcal{X}.
  \end{equation}
  where the constant $\bar{C}_j(x,y)\leq C_j\max(1,\|y\|^{p-1},\|x\|^{p-1})$ with $p\geq 1$ and $\mathbb{E} \big[\|x\|^{2(p-1)}\big]$ is finite.
 If the sequence of measures $\{\mu_n\}$ forms a strong approximate identity of order $p$, 
 then equations \eqref{eq.15} and \eqref{eq.16} are satisfied.
\end{theorem}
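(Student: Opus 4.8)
The plan is to derive both \eqref{eq.15} and \eqref{eq.16} from one pointwise estimate of the increment furnished by the local Lipschitz hypothesis \eqref{f_j-Lipschitz}. Writing $\Delta_f(x,y) = f(\eta,x+y)-f(\eta,x)$, the hypothesis gives $|\Delta_f(x,y)| \le C_j \max(1,\|y\|^{p-1},\|x\|^{p-1})\|y\|$ uniformly in $f\in\mathfrak{F}_j$ and $\eta\in I_j$, since $C_j$ does not depend on $f$ or $\eta$. The algebraic step I would rely on is to separate the maximum into a $y$-part and an $x$-part via $\max(1,\|y\|^{p-1},\|x\|^{p-1}) \le \max(1,\|y\|^{p-1}) + \|x\|^{p-1}$, which yields
\[
|\Delta_f(x,y)| \;\le\; C_j\big(\max(\|y\|,\|y\|^p) + \|x\|^{p-1}\|y\|\big).
\]
I would then abbreviate $a_n = \int_{\mathbb{R}^m}\max(\|y\|,\|y\|^p)\,\mu_n(dy)$ and $b_n = \int_{\mathbb{R}^m}\|y\|\,\mu_n(dy)$, note $b_n\le a_n$, and record that the strong approximate identity of order $p$ gives $\sqrt{n}\,a_n \to 0$, whence $a_n\to 0$ and $b_n\to 0$; moreover $\mathbb{E}[\|X\|^{2(p-1)}]<\infty$ forces $\mathbb{E}[\|X\|^{p-1}]<\infty$ by Lyapunov's inequality. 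Since all constants are independent of $f$ and $\eta$, passing to the supremum at the end will be automatic.

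For \eqref{eq.16} I would move the absolute value inside the expectation and the $\mu_n$-integral, apply Fubini and the increment bound, and split according to the displayed inequality to obtain
\[
\sqrt{n}\,\Big|\mathbb{E}\Big[\int \Delta_f(X,y)\,\mu_n(dy)\Big]\Big| \le C_j\big(1+\mathbb{E}[\|X\|^{p-1}]\big)\,\sqrt{n}\,a_n \longrightarrow 0 .
\]
This is the only place where the convergence rate encoded in the strong approximate identity is genuinely used.

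The delicate point, and the step I expect to be the main obstacle, is \eqref{eq.15}, because of the square. The temptation is to pull the square inside the $\mu_n$-integral by Jensen's inequality, but that produces the second-order quantities $\int\|y\|^2\mu_n(dy)$ and $\int\|y\|^{2p}\mu_n(dy)$, which an order-$p$ identity does not control. Instead I would keep the square outside and estimate in $\Lc_1(\mu_n)$ first: using $\big(\int \Delta_f\,\mu_n\big)^2 \le \big(\int |\Delta_f|\,\mu_n\big)^2$ and the increment bound,
\[
\Big(\int \Delta_f(X,y)\,\mu_n(dy)\Big)^2 \le C_j^2\big(a_n + \|X\|^{p-1} b_n\big)^2 \le 2C_j^2\big(a_n^2 + \|X\|^{2(p-1)} b_n^2\big),
\]
so that taking expectations and using $\mathbb{E}[\|X\|^{2(p-1)}]<\infty$ leaves a bound of order $a_n^2 + b_n^2 \to 0$. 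The decisive feature is that in this arrangement the measures $\mu_n$ appear only through the first-order quantities $a_n,b_n$, exactly those the identity controls, and it is the finiteness of $\mathbb{E}[\|X\|^{2(p-1)}]$ — with the exponent $2(p-1)$ rather than $p-1$ — that absorbs the $x$-factor after squaring. Taking the supremum over $f\in\mathfrak{F}_j$ and $\eta\in I_j$ in both estimates then completes the argument.
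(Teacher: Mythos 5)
Your proposal is correct and follows essentially the same route as the paper's proof: bound the increment by the Lipschitz hypothesis, split the maximum into a sum, keep the square outside the $\mu_n$-integral so that only the first-order quantities $\int\max(\|y\|,\|y\|^p)\,\mu_n(dy)$ and $\int\|y\|\,\mu_n(dy)$ appear, and absorb the $x$-factor with $\mathbb{E}\big[\|X\|^{2(p-1)}\big]<\infty$ after squaring. The only cosmetic difference is your two-term split of the maximum (versus the paper's three-term sum with the constant $3$ from Cauchy--Schwarz), and your explicit remark about why Jensen's inequality inside the $\mu_n$-integral would fail, which the paper avoids silently.
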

\begin{proof}
First, we shall show that \eqref{eq.16} holds. 
Using the growth condition and the Jensen's inequality, we obtain
\begin{equation}
\label{eq.26}
    \begin{aligned}
     \sup\limits_{f \in \mathfrak{F}_j} & \sqrt{n} \left|
      \mathbb{E}   \int_{\mathbb{R}^m} f ( \eta, X + y) -
      f( \eta, X) \; \mu_n (dy)   \right|  
      \\
      \leq \sup\limits_{f \in \mathfrak{F}_j} & \sqrt{n}\; 
       \mathbb{E} \int_{\Rb^m}   C_j\max(1,\|y\|^{p-1},\|x\|^{p-1})\|y\| \; \mu_n (dy)  
      \\
      & \leq  \sqrt{n} C_j \Eb 
      \max\left(\int_{\Rb^m} \|y\|\; \mu_n (dy),\int_{\Rb^m} \|y\|^p \; \mu_n (dy) ,\|x\|^{p-1}\int_{\Rb^m} \|y\| \; \mu_n (dy)   \right)   \\
      &\leq \sqrt{n} C_j  \Eb 
      \left(\int_{\Rb^m} \|y\|\; \mu_n (dy)+ \int_{\Rb^m} \|y\|^p \; \mu_n (dy) + \|x\|^{p-1}\int_{\Rb^m} \|y\| \; \mu_n (dy)   \right)\\
      &\leq \sqrt{n} C_j 
      \left(\int_{\Rb^m} \|y\|\; \mu_n (dy)+ \int_{\Rb^m} \|y\|^p \; \mu_n (dy) + \Eb[\|x\|^{p-1}]\int_{\Rb^m} \|y\| \; \mu_n (dy)   \right)
    \end{aligned}
\end{equation}

The term on the right-hand side of equation \eqref{eq.26} converges to zero due to
\eqref{cond(c)}, which proves \eqref{eq.16}. 
To prove \eqref{eq.15}, we proceed in a similar way. 
\begin{equation}
\label{eq.30}
    \begin{aligned}
      &\sup_{f \in \mathfrak{F}_j}   \mathbb{E} 
      \left( \int_{\mathbb{R}^m} f ( \eta, X + y) - f (
      \eta, X)\; \mu_n (dy) \right)^2 \\
      &\leq \mathbb{E}  \left( \int_{\Rb^m} C_j\max(1,\|y\|^{p-1},\|x\|^{p-1})\|y\| \; \mu_n (dy)  \right)^2  \\
      &\leq C_j^2 
      \mathbb{E}  \Big(\int\limits_{\Rb^m} \|y\|\; \mu_n (dy)+\int_{\Rb^m} \|y\|^p \; \mu_n (dy) + \|x\|^{p-1}\int\limits_{\Rb^m} \|y\| \; \mu_n (dy) \Big)^2  \\
      &\leq 3C_j^2 \mathbb{E} \Big[
     \Big(\int_{\Rb^m} \|y\|\; \mu_n (dy)\Big)^2+ \Big(\int_{\Rb^m} \|y\|^p \; \mu_n (dy)\Big)^2 + \Big(\|x\|^{p-1} \int\limits_{\Rb^m} \|y\| \; \mu_n (dy) \Big)^2\Big]\\
     &\leq 3C_j^2 \bigg[
     \Big(\int_{\Rb^m} \|y\|\; \mu_n (dy)\Big)^2+ \Big(\int_{\Rb^m} \|y\|^p \; \mu_n (dy)\Big)^2 + \mathbb{E} \big[\|x\|^{2(p-1)}\big] \Big(\int\limits_{\Rb^m} \|y\| \; \mu_n (dy) \Big)^2\bigg].
    \end{aligned}
\end{equation}
    We have already proved that the maximum converges to zero. Since $t\to t^2$ is continuous, we infer that the right-hand side of  \eqref{eq.30} converges to zero. Consequently,  (\ref{eq.15}) holds as well.
\end{proof}

We obtain more handy conditions when we use the usually kernels smoothing in stochastic optimization. We can state the following result. 
\begin{theorem}
\label{kernel_clt}
Suppose conditions (a3) and (a5) of Theorem \ref{smoothclt} as well as the locally Lipschitz condition \eqref{f_j-Lipschitz} are satisfied. Let a symmetric kernel $K$ with finite $p$ moment be given such that (k1)-(k2) as well as the following conditions are satisfied: 
        \begin{itemize}
            \item[(s1a)]  The integrals $\int \| F_j(x+h_n z)\|_{2,P} K(z) dz $  are finite for all $j\in J$ and for any $x\in \Xc$.%
            \item[(s2b)] The smoothing parameter satisfies $\lim_{n\to\infty} n h_n^2=0.$
        \end{itemize}
Then the result of Theorem~\ref{smoothclt} holds when $d\mu_n(x) = \frac{1}{h_n^m} K\Big(\frac{x}{h_n}\Big)\,dx$.
\end{theorem}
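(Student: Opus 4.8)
The plan is to check that the kernel-induced sequence $\mu_n$, with $d\mu_n(x)=h_n^{-m}K(x/h_n)\,dx$, satisfies every hypothesis (a1)--(a5) of Theorem~\ref{smoothclt}, and then to quote that theorem verbatim. Conditions (a3) and (a5) are assumed here, so the real content is to produce (a1), (a2), and the two limits \eqref{eq.15}--\eqref{eq.16} comprising (a4). My route to (a4) runs through Theorem~\ref{t:sufficient_cond}: since the local Lipschitz/growth bound \eqref{f_j-Lipschitz} is already assumed, it suffices to show that $\{\mu_n\}$ is a strong approximate identity of order $p$.

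The key computation is the scaling of the moments of $\mu_n$. With the substitution $z=h_n w$ one finds, for any exponent $q\ge 1$,
\[
\int_{\Rb^m}\|z\|^{q}\,d\mu_n(z)=\int_{\Rb^m}\|h_n w\|^{q}K(w)\,dw=h_n^{q}\,m_q(K),
\]
so the $p$-th moment is finite by (k2), and $m_1(K)\le m_p(K)^{1/p}<\infty$ by Jensen's inequality applied to the probability density $K$. Since $\int g\,d\mu_n=\int g(h_n w)K(w)\,dw\to g(0)$ for bounded continuous $g$, the sequence $\mu_n$ converges weakly to $\delta(0)$, which gives (a1) and makes $\{\mu_n\}$ a proper approximate convolutional identity. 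To upgrade it to a strong one I would bound $\max(\|z\|,\|z\|^{p})\le\|z\|+\|z\|^{p}$, whence
\[
\sqrt{n}\int_{\Rb^m}\max(\|z\|,\|z\|^{p})\,d\mu_n(z)\le \sqrt{n}\,h_n\,m_1(K)+\sqrt{n}\,h_n^{p}\,m_p(K).
\]
Because $p\ge1$ and $h_n\to0$, we have $h_n^{p}\le h_n$ for all large $n$, so the right-hand side is eventually dominated by $\sqrt{n}\,h_n\,(m_1(K)+m_p(K))$, and (s2b) gives $n h_n^{2}\to0$, i.e.\ $\sqrt{n}\,h_n\to0$. Thus \eqref{cond(c)} holds and $\{\mu_n\}$ is a strong approximate identity of order $p$.

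With this in hand, Theorem~\ref{t:sufficient_cond} delivers \eqref{eq.15} and \eqref{eq.16}, settling (a4). For (a2), the substitution $y=h_n z$ turns the smoothed envelope integral $\int\|F_j(x+y)\|^{2}\mu_n(dy)$ into $\int\|F_j(x+h_n z)\|^{2}K(z)\,dz$, whose finiteness for $j\in J$ is controlled by condition (s1a); the plain $P$-square-integrability $\int\|F_j\|^{2}\,dP<\infty$, needed for all $j$ (in particular the empirical layers $j\in\Jc\setminus J$ to be $P$-Donsker), is a standing moment requirement. Having verified (a1)--(a5), Theorem~\ref{smoothclt} applies and yields the stated central limit formula for the kernel estimator.

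The only genuinely delicate point is the moment estimate feeding the strong approximate identity: one must check that the single finite-$p$-moment hypothesis (k2) simultaneously controls the first and $p$-th moments of $\mu_n$, and that the bandwidth rate $n h_n^{2}\to0$ in (s2b) is tuned precisely to neutralize the $\sqrt{n}$ prefactor for both moment terms, once $p\ge1$ is used to compare $h_n^{p}$ with $h_n$. The remaining verifications are routine matchings of the kernel measure against the abstract hypotheses of Theorems~\ref{smoothclt} and~\ref{t:sufficient_cond}.
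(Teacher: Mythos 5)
Your proposal is correct and follows essentially the same route as the paper: reduce (a4) to the strong-approximate-identity condition \eqref{cond(c)} via Theorem~\ref{t:sufficient_cond}, verify it by the change of variables $z=h_n w$ which scales the moments to $h_n\,m_1(K)$ and $h_n^p\,m_p(K)$, and kill the $\sqrt{n}$ prefactor with (s2b), while (s1a) and the kernel definition handle (a1) and (a2). Your additional details (Jensen's inequality for $m_1(K)\le m_p(K)^{1/p}$ and the comparison $h_n^p\le h_n$ for large $n$) merely make explicit what the paper leaves implicit.
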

\begin{proof}
    We substitute $\mu_n(dz)$ by $\frac{1}{h_n^{m}}K(\frac{z}{h_n})dz$ and augment the function $h_K^{(n,J)}(\eta)$ defined in the proof of Theorem~\ref{smoothclt} accordingly. 
To verify the assumptions of Theorem~\ref{smoothclt} , we notice that 
assumption (s1a) and the definition of the kernel imply (a1) and (a2). We only have to verify conditions (a4). 
Using Theorem~\ref{t:sufficient_cond}, it is sufficient to verify conditions
\[
\sqrt{n}\int_{\Rb^m} \|h_nz\| K(z)\;dz + \sqrt{n} \int_{\Rb^m} \|h_nz\|^p K(z)\;dz \xrightarrow[n\to\infty]{} 0. 
\]
We see that 
\begin{multline*}
\sqrt{n}\int_{\Rb^m} \|h_nz\| K(z)\;dz + \sqrt{n} \int_{\Rb^m} \|h_nz\|^p K(z)\;dz \\
= \sqrt{n}h_n\int_{\Rb^m} \|z\| K(z)\;dz + \sqrt{n}h_n^p \int_{\Rb^m} \|z\|^p K(z)\;dz     
\end{multline*}
Since the kernel has finite $p$ moment, $p\geq 1$, assumption (s2b) implies the desired convergence. 
Therefore, Theorem \ref{smoothclt} applicable and yields the result. 
\end{proof}

Notice that the smooth estimator and kernel estimator have the same asymptotic distribution.

Popular measures of risk are the mean semi-deviation measures.  
We shall verify (a3) in the Theorem~\ref{smoothclt} and the local Lipschitz condition \eqref{f_j-Lipschitz} in Theorem \ref{t:sufficient_cond}.
Consider the case when small values are preferred, e.g. the random variable represents losses.
\begin{equation*}
\varrho[X] = \Eb[X]+\kappa \| {(\Eb[X]-X)}_{+} \|_{p} = \Eb[X]+\kappa [ \Eb[ (\max (\Eb[X]-X,0) )^p]]^{\frac{1}{p}}
\end{equation*}
where $\kappa \in [0,1]$ and $p >1$.
In this case,
\begin{align*}
f_1(\eta_1,x) = x+\kappa {\eta_1}^{\frac{1}{p}}, \quad
 f_2(\eta_2,x) = [\max(0,\eta_2-x)]^p,\quad
 f_3(x) = x.
\end{align*}
The function $f_1$ satisfies the (a3) if we assume there is an upper bound of set $I$, which is a compact set
\[ | f_1 (\eta, x) - f_1 (\eta', x) | = \kappa |
     \eta^{\frac{1}{p}} - \eta'^{\frac{1}{p}}| \leq 2\kappa \max (|\eta|^{\frac{1}{p}}, |\eta'|^{\frac{1}{p}}) \]
The functions $f_3$  evidently satisfy the local Lipschitz condition and also meet the requirements of condition (a3) when $X$ has sufficiently high moments. We only need to analyze the function $f_2$. When $p=1$, the function has a global Lipschitz constant of 1 with respect to both variables. Consider the case of $p>1$.
Then the function $f_2(\eta_2, \cdot )$ is continuously differentiable, and we obtain the following.
\begin{align*} 
    |f_2(\eta_2, x+y) & - f_2(\eta_2, x)|  = \left|\max\big(0, \eta_2 - (x+y)\big)^p - \max\big(0, \eta_2 -  x\big)^p\right| \\
    &\leq p \max \big(\|\eta_2-x-y\|^{p-1},\|\eta_2-x\|^{p-1}\big)\|y\|\\
    &\leq p c \max \big(1,\|x+y\|^{p-1},\|x\|^{p-1}\big)\|y\|\\
    &\leq pc \max \big(1,(\|x\|+\|y\|)^{p-1}\big)\|y\|
    \leq 2^{p}p c \max \big(1,\|x\|^{p-1},\|y\|^{p-1}\big)\|y\|.
\end{align*}
Here, $c$ is a constant associated with the compact set $I_2$.  Similarly, we can verify (a3) for $f_2$.  We have 
\begin{equation*}
    \begin{aligned}
    &|f_2(\eta, x) - f_2( \eta', x)|= \left|\max\big(0, \eta - x\big)^p - \max\big(0, \eta' -  x\big)^p\right| \\
    &\leq p \max(|\eta- x|^{p-1},|\eta'- x|^{p-1}) \|\eta-\eta'\|
    \leq p \big(\tilde{c}+\|x\|\big)^{p-1})\|\eta-\eta'\|\\
    \end{aligned}
\end{equation*}
Here $\tilde{c}$ is the diameter of the set $I_2.$ When the random variable $X$ has sufficiently high moments, the function $x\to p \big(\tilde{c}+\|x\|\big)^{p-1})$ would have finite $\Lc_2$ norm and  assumption (a3) will be satisfied as well.

\section{Central limit theorem for vector-valued composite functionals}
\label{s:MCLT}

We proceed to establish a more general form of the central limit theorem, extending its applicability to situations involving aggregation of risk measure, which is necessary in evaluation of systemic risk for distributed systems. Let us assume that we deal with a system of $\ell$ agents (components). We consider the random vector $X\in\mathcal{L}_p(\Omega,\mathcal{F},P;\mathbb{R}^\ell)$ comprising the random losses of the agents, i.e.,  the component $X_i$ represents the random loss of agent $i$. When assessing the total risk of the system, two primary approaches are known in literature. The first approach involves selecting a univariate risk measure $\varrho$ and applying it to a function $\Lambda_1: \Rb^\ell\to \Rb$, which aggregates the outcomes of the agents.  
Our results are applicable to this case, as we only need to include the aggregation function $\Lambda_1$ in the composition. 
This approach to systemic risk is applicable when not proprietary information or privacy concerns for the individual agents are present.  The second approach evaluates the total risk by recording the risk evaluation of the individual agents and subsequently aggregating the obtained values. This approach requires handling multiple risk measures at once and estimating their aggregation. 
The goal of this section is to address this situation. 

Let $\Omega_\ell=\{ 1,\cdots,\ell\}$, and the probability space $(\Omega_\ell,\mathcal{F}_c,c)$ is defined by using a vector $c\in\Rb^m_+$ such that $\sum_{i=1}^\ell c_i=1$  as a probability mass function on $\Omega_\ell$ and $\mathcal{F}_c$ as the collection of all subsets of $\Omega_\ell$. We assume that a set of $\ell$ univariate risk measures $\varrho_i:(\Omega,\mathcal{F},P)\rightarrow\Rb$ are used to evaluate the risk of each agent (or system component). There are several ways to aggregate the risk evaluations $\varrho_i(X_i)$, $i=1,\dots,\ell$. We could use an aggregation function $\Lambda_2:\Rb^\ell\to \Rb$ to this end, where the simplest aggregation would be a linear scalarization, i.e., the total risk 
would be given by 
\[
\Lambda_2(\varrho_1[X_1],\dots, \varrho_\ell[X_\ell])= \sum_{i=1}^\ell c_i\varrho_i[X_i]
\]
for the vector $c$ in the simplex of $\Rb^\ell$. In financial literature, it is assumed that $\Lambda_2$ is non-linear function with monotonicity, possibly convexity, and other properties. 
Alternatively, we may apply a more complex aggregation as follows. We define the random variable $X_R$ on the space $\Omega_\ell$ be setting:
\begin{equation*}
    X_R(i)=\varrho_i[X_i],\qquad i=1,\cdots,\ell.
\end{equation*}
Choosing a scalar measure of risk $\rho_0:(\Omega_\ell,\mathcal{F}_c,c)\rightarrow\mathbb{R}$, we define the measure of the total risk (systemic risk) $\rho_{\rm sys}:\mathcal{L}_p(\Omega_\ell,\mathcal{F}_c,c)\rightarrow\mathbb{R}$ as follows:
\begin{equation*}
\varrho_{\rm sys}[X]=\varrho_0[X_R],
\end{equation*}
This measure was proposed in \cite{almen2023risk}, where it was established that it satisfies postulated axioms for systemic measures.
Notice that $\varrho_{\rm sys}[X]$ becomes equal to $\Lambda(\varrho_1[X_1],\dots, \varrho_\ell[X_\ell])$ when $\rho_0(X_R) = \Eb [X_R]$ on the probability space $(\Omega_\ell,\mathcal{F}_c,c).$
Hence, $\varrho_{\rm sys}$ can represent both linear and nonlinear aggregations of $X_R$. To address the statistical estimation of the systemic risk, we shall establish a Central Limit Theorem for vector-valued composite functionals. We shall show how it applies to the systemic risk estimation in due course. 

For the sake of generality, we shall consider the case of $\ell$ random vectors $X^1,\dots X^\ell$ with realization in $\Rb^{s_{i}}$, $i=1,\dots \ell$, which may be dependent. We define a random vector $X \in \mathbb{R}^{m},$ where $m=s_1+\dots +s_\ell$ containing all vectors $X^i$ stacked, i.e.,  $X = (X^1, \dots ,X^\ell)$ and $m_{0}^{i}=1$.
We define $\ell$ composite risk functionals as follows:
\begin{equation} 
 \varrho_{i}[X^{i}] = \mathbb{E} \left[ f_{1}^{i}\left( \mathbb{E} \left[  f_{2}^{i}\left( \mathbb{E} \left[ \cdots f_{k_{i}}^{i}\left(\mathbb{E}\left[ f_{k_{i}+1}\left( X^{i} \right)\right],X^{i}\right)\right]\cdots,X^{i} \right)\right],X^{i}\right)\right] 
\end{equation}
where for $i = 1\ldots \ell$. For $j = 1\ldots k_{i}$,  we define $f_{k_{i}+1}^{i}(X^{i}):\mathbb{R}^{s_{i}} \rightarrow \mathbb{R}^{m^{i}_{k}}$ ,  and $f_{j}^{i}(\eta_{j}^{i},X^{i}): \mathbb{R}^{m_{j}^{i}}\times\mathbb{R}^{s_{i}}\rightarrow \mathbb{R}^{m_{j-1}^{i}}$.
Similarly to the univariate case, we define for $i = 1 \ldots \ell$ the following:
\begin{align*}
\bar{\eta}_{k+1}^{i} & = \Eb[f_{k+1}^{i}(X^{i})]\\
\bar{f}_{j}^{i}(\eta_{j}) & = \Eb[f_{j}^{i}(\eta_{j},X^{i})],\quad \bar{\eta}_{j}^{i} = \bar{f}_{j}^{i}(\bar{\eta}_{j+1}^i) \quad j = 1\ldots k_{i} 
\end{align*}
Assume for a moment a common nesting order $k$, we shall show later that this can always be achieved with no loss of generality. We define functions $g_{j}$ in the following way
\[
g_{k+1}(X)=\begin{pmatrix}
f_{k+1}^1(X^1) \\
\vdots \\
f_{k+1}^\ell(X^\ell)
\end{pmatrix}  
\quad
g_{j}(\eta_{j},X)=\begin{pmatrix}
f_{j}^{1}(\eta_{j}^1,X^1) \\
\vdots \\
f_{j}^{\ell}(\eta_{j}^\ell,X^\ell) 
\end{pmatrix}
\quad j = 1\ldots k_{i}.
\]
So that, $g_{k+1}(X):\mathbb{R}^{m}\rightarrow \mathbb{R}^{m_{k}}$ where $m_{k}=m^{1}_{k}+\dots+m^{\ell}_{k}$ and
$g_{j}(\eta_{j},X):\mathbb{R}^{m_{j}}\times\mathbb{R}^{m}\rightarrow \mathbb{R}^{m_{j-1}}$  where $m_{j}=m_{j}^{1}+\ldots+m_{j}^{\ell}$.
Then we have the multiple composite risk functional
\begin{equation}
\label{d:rho-vector} 
\begin{gathered}
 \varrho[X] = \mathbb{E} \left[ g_{1}\left( \mathbb{E} \left[  g_{2}\left( \mathbb{E} \left[ \cdots g_{k}\left(\mathbb{E}\left[ g_{k+1}\left( X \right)\right],X\right)\right]\cdots,X \right)\right],X\right)\right]. \\
 \text{and }\; \varrho[X]= (\varrho_{1}(X^{1}),\dots,\rho_{\ell}(X^{\ell})). 
 \end{gathered}
\end{equation}
Note that $g_{1}(\eta_{1},X): \mathbb{R}^{m_{1}}\times\mathbb{R}^{m}\rightarrow \mathbb{R}^{\ell}$. 
The following quantities become relevant
\begin{align*}
\bar{\eta}_{k+1} & = \Eb[g_{k+1}(X)] \equiv [\bar{\eta}^{1}_{k+1}(u),\ldots,\bar{\eta}^{\ell}_{k+1}(u)]\\
\bar{g}_{j}(\eta_{j}) & = \Eb[g_{j}(\eta_{j},X)] = (\bar{f}_{j}(\eta_{j}^{1}),\ldots,\bar{f}_{j}(\eta_{j}^{\ell}))]\\
\bar{\eta}_{j} & = \bar{g}_{j}(\bar{\eta}_{j+1}) \quad j = 1\ldots k.
\end{align*}
Hence $\varrho [X] = \bar{\eta}_{1} \in \mathbb{R}^{\ell}$ by construction.
We redefine the collection of functions $\Ff$ to include all the functions involved.
\begin{equation}
\label{d:Ff-redef}
\begin{aligned}
\mathfrak{F}_j\,&= \Big\{ g_{j,i}(\eta_j,\cdot): \Xc\rightarrow \Rb, \;
 i=1,\dots m_{j-1},\;\; \eta_j\in I_j,\; u\in \Uc,  \Big\}  \quad  j=1,\dots, k; \\
\mathfrak{F}_{k+1}\;& = \Big\{ g_{k+1,i}(\cdot): \Xc\rightarrow \Rb, \;\; i=1,\dots,m_{k+1},\; u\in \Uc  \Big\}, 
\quad \mathfrak{F} =\cup_{1\leq j\leq k+1} \mathfrak{F}_j. 
\end{aligned}
\end{equation}

We now state the central limit theorem of mixed smoothed and empirical estimators for the scalar-valued composite risk functional defined as above.

\begin{theorem}\label{multivariateclt}
Suppose a sequence of smoothing measures $\{ \mu_n \}$ on $\Rb^m$ is given and an index set $J$ is fixed. If the conditions of Theorem~\ref{smoothclt} are satisfied for $\{ \mu_n \}$, for all functions involved in the definition of \eqref{d:rho-vector}, and for the envelope functions of the classes \eqref{d:Ff-redef}, then  
\[
\sqrt{n}\big(\varrho^{(n,J)}_\mu [X]- \varrho[X]\big) = \sqrt{n}\left[ \begin{pmatrix} \rho_{\mu,1}^{(n,J)}\\ \vdots \\ \rho_{\mu,\ell}^{(n,J)} \end{pmatrix}  - \begin{pmatrix} \rho_1\\ \vdots \\ \rho_\ell\end{pmatrix} \right]\dto \xi_{1} (W),
\]
where $W(\cdot) = \big( W_{1}(\cdot),\dots,W(\cdot)_{k+1} \big)$ is a zero-mean Brownian bridge with $W_{1}$ being an $m$-dimensional zero-mean Brownian bridge, $W_{j}$ is a zero mean Brownian bridge of dimension $m_{j-1}$, and 
\begin{equation}
\label{defxi}
\begin{aligned}
&\xi_{k+1}(d)  = d_{k+1}\\
&\xi_{j}(d)  = \int_{\chi}g'_{j}(\bar{\eta}_{j+1},x;\xi_{j+1}(d))P(dx) + d_{j}(\bar{\eta}_{j+1}) \text{     } j = k,k-1, \ldots,1.
\end{aligned}
\end{equation} 
The covariance function is given by
\begin{align*}
\cov\left[W_{i}(\eta_{i}),W_{j}(\eta_{j})\right] & = \int_{\chi}\left[g_{i}(\eta_{i},x)-\bar{g}_{i}(\eta_{i})\right]\left[g_{j}(\eta_{j},x)-\bar{g}_{j}(\eta_{j})\right]^\top P(dx)\\
\cov\left[W_{i}(\eta_{i}),W_{k+1}(u)\right] & = \int_{\chi}\left[g_{i}(\eta_{i},x)-\bar{g}_{i}(\eta_{i})\right]\left[g_{k+1}(x)-\bar{\eta}_{k+1}\right]^\top P(dx)\\
\cov\left[W_{k+1}(u),W_{k+1}(u)\right] & = \int_{\chi}\left[g_{k+1}(x)-\bar{\eta}_{k+1}\right]\left[g_{k+1}(x)-\bar{\eta}_{k+1}\right]^\top P(dx)
\end{align*}
Additionally, if all functions are differentiable for all values of their last argument, then $\xi_1 (W)$ has the normal distribution $N (0, C^\top  \Sigma_g C),$
where $\Sigma_g$ is the covariance of $W$, and 
\[
C^\top  = \big( 
I_{\ell\times\ell} \; C_{1} \; C_{2} \; \ldots \; C_{k} 
\big), \quad C^\top _r = \left(
 \prod_{j=1}^r \Eb [g_j' (\bar{\eta}_{j + 1})]
  \right), \quad r=1,\ldots,k,
  \]
  Here $g_j' (\bar{\eta}_{j + 1})$ denotes the Jacobian of $g$ with respect to the first argument calculated at $\bar{\eta}_{j + 1}.$ 
\end{theorem}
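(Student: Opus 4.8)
The plan is to observe that the stacked functional~\eqref{d:rho-vector} is, structurally, an instance of the scalar composite functional treated in Theorem~\ref{smoothclt}: it has the same nested form, the intermediate block maps $g_j:\Rb^{m_j}\times\Rb^m\to\Rb^{m_{j-1}}$ are already vector-valued there, and the sole new feature is that the outermost layer $g_1$ now takes values in $\Rb^\ell$ rather than in $\Rb$. Since the hypotheses of Theorem~\ref{smoothclt} are assumed to hold for the $g_j$ and for the envelopes of the redefined classes~\eqref{d:Ff-redef}, I would rerun the proof of that theorem essentially verbatim. A preliminary reduction disposes of the common nesting order: if the agents have differing depths $k_i$, I pad the shorter compositions with identity layers $\eta\mapsto\eta$ up to $k=\max_i k_i$, which leaves each $\varrho_i$ unchanged and renders the blocks $g_j$ well defined. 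Such identity layers satisfy (a3)--(a5) trivially and, having Jacobian equal to the identity, do not perturb the limiting law.

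First I would run the Donsker step: assumption (a3) for the $g_j$ yields the bracketing bound~\eqref{bracketbound} and hence uniformly integrable entropy, so each class $\Ff_j$ of~\eqref{d:Ff-redef} is $P*\mu_n$-Donsker for $j\in J$ and $P$-Donsker for $j\in\Jc\setminus J$, exactly as before. The decisive feature is that all blocks and all agents are built from the single stacked sample $X_1,\dots,X_n$, $X_i=(X_i^1,\dots,X_i^\ell)$; as in Theorem~\ref{smoothclt}, the mixed empirical/smoothed processes then converge \emph{jointly} (the combined class being a finite union of Donsker classes) to one zero-mean Gaussian process $W=(W_1,\dots,W_{k+1})$. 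Smoothing contributes only a bias that vanishes by~\eqref{eq.16}, so the limiting covariance is the $P$-covariance, and the block form stated in the theorem follows from the definitions of $g_j$ and $g_{k+1}$, the cross-agent blocks carrying the dependence among the $X^i$ through $P$. I then introduce the $\Rb^\ell$-valued map $\varPsi(h)=h_1\big(h_2(\cdots h_k(h_{k+1})\cdots)\big)$ on the subset $H\subset\Hc$ and verify its Hadamard directional differentiability at $\bar g$, the recursion yielding~\eqref{defxi} being the componentwise copy of~\eqref{recursive0}. Because the functional delta method of~\cite{delta} is valid for maps into any normed space, applying it to the joint convergence of $h_\mu^{(n,J)}$ delivers $\sqrt{n}\big(\varrho_\mu^{(n,J)}[X]-\varrho[X]\big)\dto\varPsi'(\bar g;W)=\xi_1(W)$.

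For the normality claim I would linearize. When each $g_j(\cdot,x)$ is differentiable in its first argument, its directional derivative is linear in the direction, so~\eqref{defxi} collapses to the affine recursion $\xi_j(d)=\Eb[g_j'(\bar\eta_{j+1})]\,\xi_{j+1}(d)+d_j(\bar\eta_{j+1})$. Unrolling it from $j=k+1$ down to $j=1$ gives
\[
\xi_1(d)=d_1(\bar\eta_2)+\sum_{r=1}^{k}\Big(\prod_{j=1}^{r}\Eb[g_j'(\bar\eta_{j+1})]\Big)d_{r+1}(\bar\eta_{r+2}),
\]
with the convention $d_{k+1}(\bar\eta_{k+2})=d_{k+1}$, which is precisely $\xi_1(d)=C^\top d$ for the matrix $C$ displayed in the statement. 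Substituting $d=W$ then exhibits $\xi_1(W)=C^\top W$ as a fixed linear image of the Gaussian process $W$; hence it is normally distributed with mean zero and covariance $C^\top\Sigma_g C$, where $\Sigma_g=\cov(W)$ is assembled from the blocks already identified.

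The steps that merely echo Theorem~\ref{smoothclt} --- entropy, Donsker, delta method --- I expect to be routine, precisely because the proof there already carries vector-valued intermediate layers. The point requiring genuine care is the \emph{joint} convergence of the stacked, mixed-estimator layers: this is what licenses collecting the separate agents into a single Gaussian process $W$, it is the origin of the cross-agent covariance blocks, and it must be reconciled both with the dependence among the $X^i$ and with the fact that smoothing leaves the limiting covariance unchanged. A secondary point is the bookkeeping of the common-nesting reduction, ensuring the identity padding is consistent across agents and does not alter $\Sigma_g$; the linearization itself is elementary once differentiability is granted.
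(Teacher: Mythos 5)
Your proposal follows essentially the same route as the paper's proof: the identity-padding reduction to a common nesting depth $k$, the invocation of the machinery of Theorem~\ref{smoothclt} applied to the stacked blocks $g_j$ (with the delta method handling the $\Rb^\ell$-valued outer layer), and the unrolling of the linearized recursion to obtain $\xi_1(W)=C^\top W$ and the covariance $C^\top\Sigma_g C$. You in fact spell out the joint-convergence and Donsker steps that the paper compresses into ``follows immediately from Theorem~\ref{smoothclt},'' so no gap.
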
 
\begin{proof}
Each functional $\varrho_{i}$ is a composition of functions $f^i_j$, $j=1,\dots k_i$ and $i=1,\dots \ell$. We note that the value of $\varrho_{i}$ does not change if the composition of functions continued beyond $f_{1}^{i}$ by composition with the identity function. 
One can compose with the identity any number of times without changing the form or value of the resulting risk measure. Hence, without loss of generality, we may assume that $k_{i} = k$ is a common number for all risk functionals $\varrho_i$, $i=1,\dots \ell$. Otherwise we may set  $k = \max\{ k_i: i=1,\dots,\ell\}$ and relabel the indices, redefining the functions as $\tilde{f}^{i} = [\eta_{1},\ldots,\eta_{k-k_{i}},f_{1}^{i},\ldots,f_{k_{i}+1}^{i}]$ so that $\tilde{f}_{j}^i$ is the identity function for all $j = 1,\ldots, k-k_{i}$, $i = 1,\dots,\ell$. 
The statement (i) follows immediately from Theorem \ref{smoothclt}. 

Now consider the differentiable case. In the construction of $\xi_{k + 1} (d)$, we obtain
    \begin{equation*}
      \xi_k (d) = \mathbb{E} [g_k' (\bar{\eta}_{k + 1}) \cdot \bar{\eta}_{k+1}(d)] + d_k(\bar{\eta}_{k + 1}) =
     \mathbb{E} [g_k' (\bar{\eta}_{k + 1})] \cdot d_{k + 1} + d_k (\bar{\eta}_{k + 1})
    \end{equation*}
Proceeding the same way, we get 
   \begin{align*}
      \xi_{k - 1} (d) = & \mathbb{E} [g_{k - 1}' (\bar{\eta}_k) \cdot d_k] + d_{k -1} (\bar{\eta}_k)\\
      = & \mathbb{E} [g_{k - 1}' (\bar{\eta}_k) \cdot (\mathbb{E} [g_k' (\bar{\eta}_{k + 1})\cdot d_{k + 1}] + d_k (\bar{\eta}_{k + 1}))] + d_{k - 1} (\bar{\eta}_k) \\
      = & \mathbb{E} [g_{k - 1}' (\bar{\eta}_k)] \cdot \mathbb{E} [f_k' (\bar{\eta}_{k +1})] \cdot d_{k + 1} +\mathbb{E} [f_{k - 1}' (\bar{\eta}_k)] \cdot d_k (\bar{\eta}_{k+ 1}) + d_{k - 1} (\bar{\eta}_k) 
    \end{align*}
Substituting the random vector $W$ in place of the direction $d$ we obtain the following expression for the limiting distribution,
    \begin{equation}\label{check_con}
       \xi_1 (W) =  C^\top _k \cdot W_{k + 1} + C^\top _{k - 1} \cdot W_k (\bar{\eta}_{k + 1}) + \cdots
      + C^\top _1 \cdot W_2 (\bar{\eta}_3) + W_1 (\bar{\eta}_2) 
    \end{equation}
    where the matrices  $C_r$ are defined as stated. 
    Relation \eqref{check_con} implies $\xi_1 (W) = C^\top  W$ so that the variance is
    given by     ${\Var} [\xi_1 (W)] = {\Var}[C^\top  W] = C^\top  \Sigma_g C.$ 
\end{proof}

We observe that if the functions $f_j$ with $j\in J$ satisfy the Lipschitz conditions \eqref{f_j-Lipschitz}, then condition (b2) is satisfied. 

\begin{corollary}\label{kernel_mclt}
Assume the conditions of Theorem~\ref{kernel_clt} for all functions involved in the definition of \eqref{d:rho-vector}, and for the envelope functions of the classes \eqref{d:Ff-redef}. 
Given a symmetric kernel $K$ satisfying (k1)-(k2) as well as (s1a)-(s2b), 
the result of Theorem \ref{multivariateclt} holds when $d\mu_n(x) = \frac{1}{h_n^m} K\Big(\frac{x}{h_n}\Big)\,dx$.
\end{corollary}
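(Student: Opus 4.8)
The plan is to treat this corollary in exactly the same way that Theorem~\ref{kernel_clt} treats Theorem~\ref{smoothclt}: the kernel substitution $d\mu_n(x) = \frac{1}{h_n^m} K\big(\frac{x}{h_n}\big)\,dx$ reduces the abstract hypotheses (a1)--(a5) of Theorem~\ref{smoothclt} to the concrete kernel conditions (k1)--(k2) and (s1a)--(s2b) together with the locally Lipschitz bound \eqref{f_j-Lipschitz}. Since Theorem~\ref{multivariateclt} is established precisely by verifying the conditions of Theorem~\ref{smoothclt} for the stacked functions $g_j$ of \eqref{d:rho-vector} and for the envelopes of the redefined classes \eqref{d:Ff-redef}, it suffices to show that, under the assumed kernel hypotheses, those same conditions hold for the $g_j$. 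I would therefore carry out the verification of Theorem~\ref{kernel_clt} verbatim, with $f_j$ replaced by the block-valued $g_j$ and the classes $\mathfrak{F}_j$ understood as in \eqref{d:Ff-redef}, and then invoke Theorem~\ref{multivariateclt} directly.

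First I would record that conditions (a3) and (a5) are part of the hypotheses, since they are assumed for all functions appearing in \eqref{d:rho-vector}; nothing is needed there. Next, substituting the kernel measure and using that $K$ is a density with respect to Lebesgue measure gives $\mu_n(\Rb^m)=1$ and weak convergence of $\mu_n$ to $\delta(0)$ as $h_n\to 0$, which is (a1); assumption (s1a) then supplies finiteness of the shifted envelope integrals and, together with the standing $\Lc_2(P)$ integrability of the envelopes $F_j$, yields (a2). This is the same reduction as in the opening lines of the proof of Theorem~\ref{kernel_clt}.

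The only genuine computation is (a4), that is, \eqref{eq.15} and \eqref{eq.16}. Here I would invoke Theorem~\ref{t:sufficient_cond}: the locally Lipschitz condition \eqref{f_j-Lipschitz} holds for the $g_j$ by hypothesis, so it is enough to show that $\{\mu_n\}$ is a strong approximate identity of order $p$. The change of variables turning $\mu_n$ into the kernel gives $\int_{\Rb^m}\|y\|\,\mu_n(dy) = h_n \int_{\Rb^m}\|z\| K(z)\,dz$ and $\int_{\Rb^m}\|y\|^p\,\mu_n(dy) = h_n^p \int_{\Rb^m}\|z\|^p K(z)\,dz$, both finite because $K$ has a finite $p$-th moment by (k2). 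Multiplying by $\sqrt{n}$ and using (s2b), namely $n h_n^2\to 0$ (so that $\sqrt{n}\,h_n\to 0$ and hence $\sqrt{n}\,h_n^p\to 0$ for $p\ge 1$, since $h_n^p\le h_n$ eventually), shows that \eqref{cond(c)} holds. Thus $\{\mu_n\}$ is a strong approximate identity and Theorem~\ref{t:sufficient_cond} delivers \eqref{eq.15} and \eqref{eq.16}. With (a1)--(a5) in hand, Theorem~\ref{multivariateclt} applies and yields the stated convergence to $\xi_1(W)$, the block covariance of $W$, and, in the differentiable case, the normal limit $N(0,C^\top \Sigma_g C)$.

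The main, and rather mild, point to check is that all of the above holds \emph{uniformly over the block-stacked classes} \eqref{d:Ff-redef} rather than merely for each scalar coordinate in isolation. Because those classes collect every scalar component $g_{j,i}$ and the Lipschitz constant in \eqref{f_j-Lipschitz} and the envelope $F_j$ are taken over the entire class, the entropy bound and the integrability estimates from the proof of Theorem~\ref{smoothclt} propagate to the stacked setting without change, so the finitely many coordinates do not degrade the rate in \eqref{cond(c)}. I would also note that the reduction to a common nesting order $k$ used in Theorem~\ref{multivariateclt} pads the shorter compositions with identity maps; these are globally Lipschitz with constant one and have trivially $\Lc_2(P)$-integrable envelopes, hence satisfy (a2), (a3), (a5) and \eqref{f_j-Lipschitz} automatically and contribute nothing to the verification.
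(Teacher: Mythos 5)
Your proposal is correct and follows essentially the same route as the paper, which simply states that the corollary follows by combining the arguments of the proofs of Theorem~\ref{kernel_clt} and Theorem~\ref{multivariateclt}; you carry out that combination explicitly (reducing (a1)--(a2) to (s1a), verifying (a4) via Theorem~\ref{t:sufficient_cond} and the strong approximate identity computation under (s2b), and invoking Theorem~\ref{multivariateclt}). Your added remarks on the block-stacked classes and the identity padding are consistent with the paper's construction and fill in details the paper leaves implicit.
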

\begin{proof}
    The statement follows by combining the arguments of the proofs of Theorem~\ref{kernel_clt} and Theorem~\ref{multivariateclt}.

\end{proof}

\subsection{Central limit theorem for sample-based stochastic optimization}
\label{sec:decision} 

In this section, we return to the kernel-smoothed estimator of the optimal value of a composite optimization problem. Recall the formulation of the associated composite optimization problems. 
\begin{equation}
\label{p:last}
    \vartheta= \min_{u \in U} \mathbb{E}[f_1(u,\mathbb{E}[f_2(u,\mathbb{E}[\ldots f_k(u,\mathbb{E}[f_{k+1}(u,X)],X)] 
    \ldots,X)],X)]
\end{equation}
where $X$ is a {m}-dimensional random vector; $U$ is a closed convex set in $\Rb^n$ and  the function $f_j: \Rb^n \times \mathbb{R}^{m_j} \times \mathbb{R}^{m} \rightarrow \mathbb{R}^{m_{j-1}}$, $j=1,\cdots,k$ with $m_0=1$, and $f_{k+1}: \Rb^n \times \mathbb{R}^{m} \rightarrow \mathbb{R}^{m_k}$ are continuous. We shall assume that problem \eqref{p:last} is solvable and has a unique optimal solution denote $\hat{u}$.  
We fix a sufficiently large compact set $\Uc$ such that $\hat{u}\in \intt \Uc\subset U.$
Further, we again fix compact sets $I_1\subset\Rb^{m_1},\cdots,I_k\subset\Rb^{m_k}$ such that $\bar{f}_{j+1}(\Uc,I_{j+1}) \subset \intt(I_j)$, $j=1,\cdots,k-1$, and $\bar{f}_{k+1}(\Uc) \subset \intt(I_k)$, where $\intt(I_j)$ stands for the interior of $I_j.$ Without loss of generality, we assume that $\Uc$ and $I_j$, $j=1,\dots $ are convex sets.

Our first goal is to show  a central limit formulae for the optimal value of problem \eqref{p:last} which used mixed smooth and empirical estimators and in a second step, we shall specialize the statement when the smoothing uses the same kernel. 
We shall follow a similar line of proofs with slightly modified definitions of the auxiliary objects. Analogous functions are defined as:
\begin{equation}
\label{d:Ff-(u,eta)}
\begin{split}
    & \bar{f}_j(u,\eta_j)=\int_{\mathcal{X}}f_j(u,\eta_j,x)\;P(dx), \ \ \ \ j=1,\cdots,k \\
    &  \bar{f}_{k+1}(u)=\int_{\mathcal{X}}f_{k+1}(u,x)\;P(dx) \\
   & \bar{\eta}_{k+1}=\bar{f}_{k+1}(\hat{u}),\quad \bar{\eta}_{j}=\bar{f}_{j}(\hat{u},\hat{\eta}_{j+1})\quad j=1,\cdots,k. 
\end{split}
\end{equation}
The vector function $f(u, \eta,x)$ is defined on $\Uc \times I \times \mathcal{X} \rightarrow \mathbb{R}^{m}$ be setting 
\[
f(u,\eta,x)=[f_1(u,\eta_1,x),f_2(u,\eta_2,x),\cdots,f_k(u,\eta_k,x),f_{k+1}(x)]^{\top}.
\] 
We modify the classes of functions as follows:
\begin{align*}
\mathfrak{F}_j\,&= \Big\{  f_{j,i}(u,\eta_j,\cdot): \Xc\rightarrow \Rb, \;
 i=1,\dots m_{j-1},\;\; \eta_j\in I_j,\; u\in \Uc,  \Big\}  \quad  j=1,\dots, k; \\
\mathfrak{F}_{k+1}\;& = \Big\{ f_{k+1,i}(u,\cdot): \Xc\rightarrow \Rb, \;\; i=1,\dots,{m_k},\; u\in \Uc  \Big\}, 
\quad \mathfrak{F} =\cup_{1\leq j\leq k+1} \mathfrak{F}_j. 
\end{align*}
The associated envelope functions is given by
\[
F_j(x)= \underset{\eta_j \in I_j, u \in \Uc}{\sup} | f_{j,i} (u, \eta_j, x) |,\quad F_{k+1}(x)= \underset{u \in \Uc}{\sup} | f_{k+1,i} (u,x) |
\]
Due to the compactness of $\Uc\times I_j$, the functions $F_j(\cdot)$, $j\in\Jc$ are well-defined and measurable (\cite[Theorem 7.42]{mainbook}). The space $\Hc$ is now defined as follows:
\begin{equation*}
    \mathcal{H}={\mathcal{C}}_1^{(0,1)}(\Uc \times I_1) \times {\mathcal{C}}_{m_1}^{(0,1)}(\Uc \times I_2) \times \cdots \times {\mathcal{C}}_{m_{k-1}}^{(0,1)}(\Uc \times I_k) \times {\mathcal{C}}_{m_k}(\Uc)
\end{equation*}
where  ${\mathcal{C}}^{(0,1)}_{m_{j-1}}$ is the space of ${\mathbb{R}}^{m_{j-1}}$-valued continuous function on $\Uc \times I_j$, which is continuous with respect to the first component and continuously differentiable with respect to the second component. We denote the Jacobian of $f_j(u,\eta_j,x)$ with respect to the second argument calculated at $\eta_{j}^{\ast}\in I_j$ by $f_{j}^{'}(u,\eta^{\ast},x)$. For every direction $d \in \mathcal{H}$, we define recursively the following elements:
\begin{equation}
\label{recursive2}
\begin{gathered}
\xi_{k+1}(d) = d_{k+1},\\
\xi_{j}(d) = \int_{\Xc}  f'_{j}(\hat{u},\bar{\eta}_{j+1},x)\xi_{j+1}(d)\,P(dx) + d_{j}\big(\bar{\eta}_{j+1}\big),\quad j=k,k-1,\dots,1.
\end{gathered}
\end{equation}
Suppose an index set $J\subseteq\{1, \ldots, k+1\}$ is fixed to determine the layers at which we shall apply smoothing.      

\begin{theorem} 
\label{CLTsmooth}
Suppose assumptions (a1)-(a2) of Theorem~\ref{smoothclt} is satisfied and additionally the following conditions hold.
\begin{itemize}
    \item[(b1)] Functions $\tilde{F}_j:\Xc\to \Rb$, $j\in\Jc$ with a finite $\Lc_2 (P)$ norm exists such that for all $(u,\eta),(u',\eta')\in\Uc\times I_j$ and all $x\in\Xc$
\begin{gather*}
\|f_j(u,\eta,x)-f_j(u',\eta',x)\| \leq \|(u,\eta) -(u',\eta')\|\tilde{F}_j(x),\quad j=1,\dots, k+1.
\end{gather*}
    \item[(b2)]
    The following two conditions are satisfied for all $j\in J$:
    \begin{gather}\label{eq.15a}
      \sup\limits_{f \in \mathfrak{F}_j}\; \mathbb{E} \Big[ \Big(
      \int_{\mathbb{R}^m} \big(f (u,\eta, X + y) - f(u,\eta, X)\big) \;
      \mu_n (dy) \Big)^2 \Big]  \xrightarrow[n \rightarrow \infty]{} 0;
      \\
    \label{eq.16a}
      \underset{f \in \mathfrak{F}_j}{\sup} \; \sqrt{n} 
      \Big| \mathbb{E} \Big[ \int_{\mathbb{R}^m} \big( f (u,\eta, X + y) -f (u,\eta, X) \big) \;\mu_n (dy)  \Big] \Big| \xrightarrow[n \rightarrow \infty]{} 0.
    \end{gather}
    \item[(b3)] The functions $f_j(u,\cdot,x)$, $j=1,\cdots,k$ are continuously differentiable for every $x \in \mathcal{X}, u \in U$. Moreover, their derivatives $\nabla_\eta f_j(u,\eta,x)$ are continuous with respect to the first two arguments and $\nabla_\eta f_j(u,\eta,x+y)$ is uniformly bounded by a $\mu_n$-integrable function $g(y).$
  \end{itemize}
Then 
$\sqrt[]{n}\big[ \vartheta^{(n,J)}_{\mu}-\vartheta\big] \dto \xi_1(G)$ 
where $G(\cdot)=(G_1(\cdot),\ldots,G_k(\cdot),G_{k+1})$ is zero-mean Brownian process on $I =I_1 \times I_2 \times \ldots \times I_k$. The covariance function of $G$ has the following form:
\begin{equation} 
\begin{aligned}
 \cov[G_i(\eta_i),G_j(\eta_j)] & =\int_{\mathcal{X}}[f_i(\hat{u},\eta_i,x)-\bar{f}_i(\hat{u},\eta_i)][f_j(\hat{u},\eta_j,x)-\bar{f}_j(\hat{u},\eta_j)]^{\top}\; P(dx) \\
 & \qquad\qquad\qquad\eta_i \in I_i, i=1,\ldots,k, \\
	\cov[G_i(\eta_i),G_{k+1}] & =\int_{\mathcal{X}}[f_i(\hat{u},\eta_i,x)-\bar{f}_i(\hat{u},\eta_i)][f_{k+1}(\hat{u},x)-{\bar{f}}_{k+1}(\hat{u})]^{\top} \; P(dx) \\
\cov[G_{k+1},G_{k+1}]& =\int_{\mathcal{X}}[f_{k+1}(\hat{u},x)-{\bar{f}}_{k+1}(\hat{u})][f_{k+1}(x)-{\bar{f}}_{k+1}(\hat{u})]^{\top} \; P(dx)
\end{aligned}   
\end{equation}
\end{theorem}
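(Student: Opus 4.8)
The plan is to follow the architecture of the proof of Theorem~\ref{smoothclt}, modifying the terminal functional so that it incorporates the outer minimization over $u$; the empirical-only version of this statement is \cite[Theorem 3]{dentcheva2017statistical}, and the task here is to combine that argument with the smoothing machinery. First I would establish the Donsker properties. For each $j\in J$ the locally Lipschitz estimate (b1) yields, exactly as in the proof of Theorem~\ref{smoothclt}, a bound on the bracketing numbers $\mathcal{N}_{[\,]}\big(2\varepsilon\|\tilde{F}_j\|,\Ff_j,\|\cdot\|\big)$ in terms of the covering numbers of the compact convex set $\Uc\times I_j$; invoking Lemma 2.7.8 of \cite{van1996} gives uniformly integrable entropy for the classes \eqref{d:Ff-(u,eta)}. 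Together with (a1)--(a2) and (b2), this places us in the setting of \cite[Theorem 2.2]{Rost}, so each $\Ff_j$ with $j\in J$ is $P\ast\mu_n$-Donsker, while the classes $\Ff_j$ with $j\in\Jc\setminus J$ are $P$-Donsker by (b1). Hence the stacked, rescaled perturbation process $\sqrt{n}\big(h_\mu^{(n,J)}-\bar f\big)$ converges weakly in $\Hc$ to the zero-mean Gaussian process $G$, whose componentwise covariance is the stated one.

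Next I would introduce the value functional. Define $\Psi:\Hc\to\Cc_1(\Uc)$ by $\Psi(h)(u)=h_1\big(u,h_2(u,\cdots h_k(u,h_{k+1}(u))\cdots)\big)$, which is scalar-valued for each $u$ since $m_0=1$, and let $\Theta(h)=\min_{u\in\Uc}\Psi(h)(u)$, so that $\vartheta_\mu^{(n,J)}=\Theta(h_\mu^{(n,J)})$ and $\vartheta=\Theta(\bar f)$. The map $\Psi$ is Hadamard directionally differentiable at $\bar f$ by the chain rule for such derivatives; condition (b3) (continuous differentiability in $\eta$ with a $\mu_n$-integrable dominating function) makes the recursion \eqref{recursive2} the correct expression for $\Psi'(\bar f;d)(u)$, the interchange of limit and integral being justified by dominated convergence exactly as in Theorem~\ref{smoothclt}. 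The minimum operator $m:\Cc_1(\Uc)\to\Rb$, $m(\phi)=\min_{u}\phi(u)$, is itself Hadamard directionally differentiable with $m'(\phi_0;\psi)=\min\{\psi(u): u \text{ minimizes } \phi_0\}$; since problem \eqref{p:last} has the unique minimizer $\hat u\in\intt\Uc$, the set of minimizers of $\Psi(\bar f)$ is $\{\hat u\}$ and $m'(\Psi(\bar f);\cdot)$ reduces to evaluation at $\hat u$.

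Composing these two derivatives via the chain rule gives $\Theta'(\bar f;d)=\Psi'(\bar f;d)(\hat u)=\xi_1(d)$, that is, the recursion \eqref{recursive2} evaluated at the optimizer. I would then invoke the functional delta theorem of \cite{delta}, which applies precisely to Hadamard \emph{directionally} differentiable maps, to the map $\Theta$ at $\bar f$ together with $\sqrt{n}\big(h_\mu^{(n,J)}-\bar f\big)\dto G$, and conclude
\[
\sqrt{n}\big(\vartheta_\mu^{(n,J)}-\vartheta\big)=\sqrt{n}\big(\Theta(h_\mu^{(n,J)})-\Theta(\bar f)\big)\dto \Theta'(\bar f;G)=\xi_1(G).
\]
The covariance structure of $G$ displayed in the theorem is inherited directly from the Donsker step of the first paragraph.

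The step I expect to be the crux is the differentiability of the composite value functional and its interplay with the optimization: one must verify that $\Psi$ genuinely maps into $\Cc_1(\Uc)$ with a directional derivative there, that the minimum operator on $\Cc_1(\Uc)$ is Hadamard directionally differentiable, and that uniqueness of $\hat u$ collapses its derivative to pointwise evaluation. The uniqueness of the solution together with $\hat u\in\intt\Uc$ is exactly what prevents a nontrivial minimization over a solution set from surviving in the limit and secures the clean form $\xi_1(G)$; without it one would obtain $\min$ over the optimal set of $\Psi'(\bar f;G)(\cdot)$ instead.
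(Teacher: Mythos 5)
Your proposal is correct and follows essentially the same route as the paper: establish the (mixed) Donsker property for the perturbation process over $\Uc\times I$, verify Hadamard directional differentiability of the optimal-value functional at $\bar f$, and apply the functional delta theorem of \cite{delta}, with the unique minimizer $\hat u$ collapsing the derivative of the min operator to evaluation at $\hat u$ so that the limit is $\xi_1(G)$. The only (immaterial) difference is in how differentiability of the value functional is justified: you factor through the min operator on $\Cc_1(\Uc)$ and use its Danskin-type directional derivative, whereas the paper cites Theorem 4.13 of \cite{shapiro2013} for differentiability of the optimal value and the formula $\psi'(\bar f;d)=\min_{u\in U}\Phi_h'(u,\bar f)d$.
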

\begin{proof}
We start from the function $h_\mu^{(n,J)}(u,\eta)$. 
\begin{equation}
\big[h_\mu^{(n, J)}\big]_j(\eta)=
\begin{cases}
\frac{1}{n}\sum_{i=1}^{n}\int f_j(u,\eta,X_i+ z) d\mu_n(z) &\text{ if } j\in J, j=1,\dots, k,\\
\frac{1}{n}\sum_{i=1}^{n} f_j(u,\eta,X_{i}) &\text{ if } j\not\in J, j=1,\dots, k,\\
\frac{1}{n}\sum_{i=1}^{n}\int f_{k+1}(u,X_{i}+ z) d\mu_n(z) &\text{ if } k+1\in J,\\
\frac{1}{n}\sum_{i=1}^{n} f_{k+1}(u,X_{i}) &\text{ if } k+1\not\in J.\\
\end{cases}
\end{equation}
The function $\bar{f}(u,\eta)$ is defined as
$\bar{f}(u,\eta)=\; P f=\int_{\mathcal{X}}f(u,\eta,X)\;dx.$
We observe again that all assumptions $(a1)$-$(a4)$ of Theorem \ref{smoothclt} are satisfied if we consider the pair $(u,\eta)$ instead of $\eta$. Hence, we infer 
\begin{equation*}
    \sqrt[]{n}(h^{(n,J)}_{\mu}-\bar{f}) \dto G
\end{equation*}
where ${G}$ is a standard Brownian process with zero-mean and covariance function 
\begin{equation*} 
\cov[G(u,{\eta}),G(u', \eta')]=\int_{\Xc}\big[f(u, \eta,x)-\bar{f}(u,\eta)\big]\big[f(u',\eta',x)-\bar{f}(u', \eta')\big]^{\top} \;P(dx)
\end{equation*}
We define the operator $\Psi: \Hc \rightarrow \Rb$ as follows
        \begin{equation*}
           \Psi(u,h) = h_1\Big(u,h_2\big(u,\;\cdots h_k(u,h_{k+1}(u))\;\cdots\big)\Big).
        \end{equation*}
        and consider the optimal value of the parametric problem
\[
\psi(h) = \min_{u \in U} {\Psi}(u,h).
\]
When $h=\bar{f}$, we have $\Psi(u,\bar{f})=\bar{f_1}(u, \bar{f_2}(u, \cdots \bar{f_k}(u, f_{k+1})  \cdots))$. When $h=h^{(n,J)}_{\mu}$, we have $\Psi(u,h^{(n,J)}_{\mu})=\vartheta^{(n,J)}_{\mu_n}.$ These facts allow us to use the infinite-dimensional delta theorem  in \cite{delta} to transfer the convergence of $h^{(n)}_{\mu}$ to the convergence of $\Psi(u, h^{(n)}_{\mu})=\vartheta_{\mu}^{(n)}$ provided that we verify the Hadamard directional differentiability of the optimal value function $\psi(\cdot)= \min_{u \in U} \Phi(u, \cdot)$ at $\bar{f}$. To this end, we use the Theorem 4.13 in \cite{shapiro2013}, which  implies that the optimal value function $v(\cdot)$ is Fr\'echet differentiable. Denoting its derivative by ${\Phi}^{'}_h(u,\bar{f})$, we infer the  Hadamard directional differentiability of $\psi(h)$ at $\bar{h}$ in every direction $d$ with $v^{'}(\bar{f};d)=\min_{u \in U} \Phi_{h}^{'}(u,\bar{f})d.$

The remaining part of the proof is the same as the proof of Theorem \ref{smoothclt}.
\end{proof}

Notice that if the functions $f_j$ with $j\in J$ satisfy the Lipschitz conditions \eqref{f_j-Lipschitz}, then condition (b2) is satisfied. 

Consider now smoothing my kernels. 
The following result may be proved in an analogous way as Theorem \ref{CLTsmooth}.

\begin{corollary}\label{CLTkernel}
Assume the conditions of Theorem~\ref{kernel_clt} for the functions involved and for the envelope functions of the classes \eqref{d:Ff-(u,eta)}. 
Given a symmetric kernel $K$ satisfying (k1)-(k2) as well as (s1a)-(s2b), 
the result of Theorem \ref{multivariateclt} holds when $d\mu_n(x) = \frac{1}{h_n^m} K\Big(\frac{x}{h_n}\Big)\,dx$.
\end{corollary}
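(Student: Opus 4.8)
The plan is to show that the kernel family $\mu_n$ given by $d\mu_n(z) = h_n^{-m} K(z/h_n)\,dz$ satisfies every hypothesis of Theorem~\ref{CLTsmooth}, so that the conclusion follows by a direct invocation of that theorem, in exact parallel to the way Theorem~\ref{kernel_clt} specializes Theorem~\ref{smoothclt}. The only assumptions in Theorem~\ref{CLTsmooth} that actually constrain the smoothing sequence are (a1), (a2) and (b2); conditions (b1) and (b3) are intrinsic to the functions $f_j$ and are already granted by the hypotheses of Theorem~\ref{kernel_clt}. Indeed, the local Lipschitz bound \eqref{f_j-Lipschitz} supplies (b1), while the $\mu_n$-integrability of the derivative majorant required in (b3) follows from the kernel moment conditions (k1)--(k2) together with (s1a).

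First I would record the moment identity obtained by the change of variables $w = z/h_n$: for every $q\ge 1$,
\[
\int_{\Rb^m} \|z\|^q\,\mu_n(dz) = h_n^q \int_{\Rb^m} \|w\|^q K(w)\,dw = h_n^q\, m_q(K),
\]
which is finite by (k2). Weak convergence of $\mu_n$ to $\delta(0)$, i.e.\ (a1), is the standard consequence of $K$ being a density and $h_n\to 0$; the $\Lc_2(P)$ integrability of the translated envelopes required in (a2) is precisely (s1a), again combined with the finiteness of $m_p(K)$.

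The substantive step is (b2), namely the two smoothing-bias relations \eqref{eq.15a} and \eqref{eq.16a}. Here I would appeal to Theorem~\ref{t:sufficient_cond}, whose proof transfers verbatim to the present $(u,\eta)$ parametrization because the Lipschitz majorant $\bar{C}_j(x,y)\|y\|$ in \eqref{f_j-Lipschitz} is independent of $(u,\eta)$, so the supremum over $\mathfrak{F}_j$ may be passed inside the expectation. By that theorem it suffices to check that $\{\mu_n\}$ is a strong approximate identity of order $p$, i.e.\ \eqref{cond(c)}. Using the moment identity, this reduces to
\[
\sqrt{n}\,h_n\, m_1(K) + \sqrt{n}\, h_n^{p}\, m_p(K) \xrightarrow[n\to\infty]{} 0,
\]
and since $\sqrt{n}\,h_n = \sqrt{n h_n^2}\to 0$ by (s2b) and $h_n^{p}\le h_n$ for $n$ large (as $h_n\to 0$ and $p\ge 1$), both terms vanish.

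Having verified (a1), (a2) and (b2) for the kernel family, and with (b1) and (b3) in force, Theorem~\ref{CLTsmooth} applies and delivers the stated central limit formula together with its covariance structure. The one point deserving care is the uniformity in this last reduction: the strong-approximate-identity estimate must control the bias \emph{uniformly} over the whole class $\mathfrak{F}_j$, hence uniformly in $u\in\Uc$ and $\eta\in I_j$, which is exactly what the $(u,\eta)$-free Lipschitz majorant guarantees. This is the main, though essentially routine, obstacle; the remainder is a mechanical combination of the kernel substitution from Theorem~\ref{kernel_clt} with the delta-method argument of Theorem~\ref{CLTsmooth}.
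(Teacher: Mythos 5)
Your proposal is correct and follows essentially the same route as the paper: the paper's proof of this corollary simply invokes Theorem~\ref{CLTsmooth} and verifies assumption (b2) exactly as in the proof of Theorem~\ref{kernel_clt}, i.e.\ via Theorem~\ref{t:sufficient_cond} and the scaling identity $\int\|z\|^q\,\mu_n(dz)=h_n^q m_q(K)$ together with (s2b). Your additional remarks on (a1), (a2), (b1), (b3) and on uniformity over $(u,\eta)$ merely make explicit what the paper leaves implicit.
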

\begin{proof}
The statement follows by virtue of Theorem\ref{CLTsmooth} after verifying assumption (b2) in the same way as in the proof of Theorem~\ref{kernel_clt}.
\end{proof}

For stochastic problem, we introduce a portfolio optimization problem where the random returns of the securities are represented by a random vector $X$. The portfolio itself is characterized by a vector $u$, which denotes the allocation of the available capital across the securities. The objective is to address the mean-semideviation optimization problem for potential losses. The problem is formulated as follows:
\begin{equation*}
\varrho[u,X]=-\Eb[u^TX]+\kappa \| {(\Eb[u^TX]-u^TX)}_{+} \|_{p}=-\Eb[u^TX]+\kappa [ \Eb[ (\max \{\Eb[u^TX]-u^TX,0\} )^p]]^{\frac{1}{p}}
\end{equation*}
where $\kappa \in [0,1]$ and $p >1$. In this case,
\begin{align*}
f_1(u,\eta_1,x)=-u^Tx+\kappa {\eta_1}^{\frac{1}{p}},
 f_2(u,\eta_2,x)=[\max\{0,\eta_2-u^Tx\}]^p,
 f_3(u,x)=u^Tx
\end{align*}
To verify the locally Lipschitz condition, \( f_1 \) has a modulus of continuity \( w(t) = c_1 t \), where \( c_1 \) is the maximum of the norm of \( u \in \mathcal{U} \). Specifically, we have

\begin{equation*}
    |f_1(u, \eta_1, x+y) - f_1(u, \eta_1, x)| = |u^T y| \leq c_1 \|y\|,
\end{equation*}
where the last inequality follows from the Cauchy-Schwarz inequality. The same verification applies to \( f_3 \) in a similar manner. The remaining verifications follow the same process as the previous example.

\section{Application}
\label{s:simulation}
\subsection{Risk measures representable as optimal value of an optimization problem}\label{5.1}

Now we discuss the special case of two-level composite stochastic optimization
\begin{equation*} \label{CLT-2level}
    \vartheta= \min_{u \in U} f_1(u,E[f_2(u,X)])
\end{equation*}
This structure appears when we evaluate the Average (Conditional) Value-at-Risk or 
higher order measures of risk. 
Recall that higher order risk measure with $c=\frac{1}{\alpha}>1$ is defined as follows (\cite{krokhmal2007higher,Dentcheva2024}:
\begin{equation}
\label{d:inverse}
\rho[X] = \min_{u\in \Rb} \Big\{ u + c \big\|\max(0, X- u)\big\|_p \Big\}
\end{equation}
We represent this measure as a the optimal value of a composite functional by setting:
\begin{equation*}
f_1(u,\eta,x)  = u + c \eta^{\frac{1}{p}},\quad 
f_2(u,x)  = \big(\max(0, x- u)\big)^p.
\end{equation*}
For $p>1$ and $c>1$, the optimization  problem on the right-hand side of \eqref{d:inverse} has a unique solution, which we denote by $\hat{u}$. In that case, we can select a compact set $\Uc$ sufficiently large to contain the point $\hat{u}.$
For $p=1$, we do not need a composition; the problem reduces to the optimization of the expected value of a convex function as well as the case in this paper specialized to one layer.  

Consider the estimators based on a proper approximate identity $\{\mu_n\}$ and on a kernel $K$, respectively. 
\begin{gather*}
   \vartheta_{\mu}^{(n)} = \min_{u\in\Uc}f_1\Big(u,\sum_{i=1}^{n}\frac{1}{n} \int f_2(u,X_i+y)\;\mu_n(dy)\Big) \\
   \vartheta_K^{(n)}=\min_{u\in\Uc} f_1\Big(z,\sum_{i=1}^{n}\frac{1}{n}\int f_2(u,y)K\Big(\frac{y-X_i}{h_n}\Big)\frac{1}{h_n^m}\;dy\Big). 
\end{gather*}
Following the same technique, we define the functions:
    \begin{gather*}
        h_{\mu}^{(n)}(u)=\frac{1}{n}\sum_{i=1}^{n}\int f_2(u,X+y)d \mu_n(y)\\
        h_K^{(n)}(u)=\frac{1}{n}\sum_{i=1}^{n}\int f_2(u,x)\frac{1}{h_n^m}K(\frac{x-X_i}{h_n})dx.
    \end{gather*}
The mapping $\Phi(u,h)=f_1(u,h(u))$, where the operator $\Phi:\Uc \times C(\Uc) \rightarrow \Rb$, and the space $\Rb^n \times C(\Uc)$ is equipped with product norm of the Euclidean norm on $\mathbb{R}^{n}$ and supremum norm of $C(\Uc)$ . The functional $\psi$ is defined $\psi(h)=\min_{u \in \Uc} \Phi(u,h)$, where $\psi: C(\Uc) \rightarrow  \Rb$. It is easy to see that $\vartheta= \psi(\bar{f}_2)$, $\vartheta_{\mu}^{(n)}=\psi(h_{\mu}^{(n)})$ and $\vartheta_K^{(n)}=\psi(h_{K}^{(n)})$. 
We denote the set of optimal solutions in \eqref{CLT-2level} as $\hat{U}$. The class $\Ff$ for this setting is defined as $\Ff=\{ f_2(u, \cdot)\;\; u\in\Uc)\}$.  
\begin{corollary}
\label{c:2level-mu}
Suppose the conditions (a1)-(a4) of Theorem \ref{smoothclt} are satisfied and additionally $f_1(u,\cdot)$ is differentiable for all $u\in \Uc$ , and both $f_1(\cdot,\cdot)$ and its derivative w.r.t. the second argument $\nabla f_1(\cdot,\cdot)$ are continuous w.r.t. both arguments. 
Then 
\begin{equation*}
\sqrt[]{n}[ \vartheta_{\mu}^{(n)}-\vartheta] \dto \min_{u \in \hat{U}}
	\langle \nabla f_1(u,\Eb[f_2(u,X)]),G(u) 	\rangle
\end{equation*}
where $G(u)$ is a zero-mean Brownian process on $G$ with covariance function 
\begin{equation*}
    cov[G(u),G(u')]=\int_{\mathcal{X}}(f_2(u,x)-\Eb[f_2(u,X)])(f_2(u',x)-\mathbb{E}[f_2(u',X)])^{\top} P(dx)
\end{equation*}
Moreover, if the optimal solution set $\hat{U}$ contains only one element $\hat{u}$, then 
\begin{equation*}
\sqrt[]{n}[ \vartheta_{\mu}^{(n)}-\vartheta] \dto 
    \langle \nabla f_1(\hat{u},\mathbb{E}[f_2(\hat{u},X)]),G(\hat{u})   \rangle
\end{equation*}
where $G(\hat{u})$ is a zero-mean normal vector with covariance 
\[
\cov[G(\hat{u}),G(\hat{u})]=\cov[f_2(\hat{u},X),f_2(\hat{u},X)].
\]
\end{corollary}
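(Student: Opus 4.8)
The plan is to recognize this corollary as the two-layer specialization of Theorem~\ref{CLTsmooth} and to prove it via the functional delta method applied to the optimal-value functional $\psi(h)=\min_{u\in\Uc}\Phi(u,h)$ with $\Phi(u,h)=f_1(u,h(u))$. Since $\vartheta=\psi(\bar f_2)$ and $\vartheta_{\mu}^{(n)}=\psi(h_{\mu}^{(n)})$ by construction, the whole argument reduces to (i) a functional CLT for the inner smoothed process $h_\mu^{(n)}$ and (ii) the Hadamard directional differentiability of $\psi$ at $\bar f_2$, after which the delta theorem of \cite{delta} transfers the limit.

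First I would establish the functional central limit theorem for $h_\mu^{(n)}$. Here the only function class involved is $\Ff=\{f_2(u,\cdot):u\in\Uc\}$, so assumption (a3) together with the compactness of $\Uc$ yields, exactly as in the proof of Theorem~\ref{smoothclt}, a uniform entropy bound for $\Ff$; hence $\Ff$ is $P*\mu_n$-Donsker. Combined with (a1), (a2), (a4) and \cite[Theorem 2.2]{Rost}, this gives
\[
\sqrt{n}\big(h_\mu^{(n)}-\bar f_2\big)\dto G \qquad\text{in } C(\Uc),
\]
where $G$ is the zero-mean Brownian process whose covariance is the one stated in the corollary, since $\bar f_2(u)=\Eb[f_2(u,X)]$.

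The second and central step is the Hadamard directional differentiability of $\psi$ at $\bar f_2$, and I expect this to be the main obstacle, as it is where the structure of the optimal solution set $\hat U$ enters. Because $f_1(u,\cdot)$ is differentiable and both $f_1$ and $\nabla f_1$ are jointly continuous, the map $h\mapsto\Phi(u,h)=f_1(u,h(u))$ is differentiable in $h$ with
\[
\Phi_h'(u,\bar f_2)\,d=\langle\nabla f_1(u,\bar f_2(u)),d(u)\rangle,
\]
uniformly over the compact set $\Uc$. Invoking a Danskin-type parametric optimization result in the spirit of \cite[Theorem 4.13]{shapiro2013}, the optimal-value functional is then Hadamard directionally differentiable at $\bar f_2$ with
\[
\psi'(\bar f_2;d)=\min_{u\in\hat U}\langle\nabla f_1(u,\Eb[f_2(u,X)]),d(u)\rangle .
\]
The delicate points are ensuring that the minimum over $\hat U$ is attained and that the directional derivative genuinely collapses to minimizing the linearized objective over the solution set; this requires the continuity of $\nabla f_1$, the compactness of $\Uc$, and a uniform argument over near-optimal solutions.

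Finally I would apply the infinite-dimensional delta theorem of \cite{delta} for Hadamard directionally differentiable maps to conclude
\[
\sqrt{n}\big[\vartheta_\mu^{(n)}-\vartheta\big]=\sqrt{n}\big[\psi(h_\mu^{(n)})-\psi(\bar f_2)\big]\dto\psi'(\bar f_2;G)=\min_{u\in\hat U}\langle\nabla f_1(u,\Eb[f_2(u,X)]),G(u)\rangle,
\]
which is the first assertion. For the singleton case $\hat U=\{\hat u\}$ the minimum reduces to a single term; $G(\hat u)$ is then the finite-dimensional marginal of $G$ at $\hat u$, hence a zero-mean normal vector with covariance $\cov[f_2(\hat u,X),f_2(\hat u,X)]$, yielding the stated closed-form limit $\langle\nabla f_1(\hat u,\Eb[f_2(\hat u,X)]),G(\hat u)\rangle$.
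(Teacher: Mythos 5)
Your proposal is correct and follows essentially the same route as the paper: the paper omits the proof of this corollary, stating that it follows the line of argument of Theorem~\ref{CLTsmooth}, which is exactly what you reconstruct — the Donsker/entropy argument for $\{f_2(u,\cdot):u\in\Uc\}$, Hadamard directional differentiability of the optimal-value functional $\psi$ via the Danskin-type result \cite[Theorem 4.13]{shapiro2013}, and the infinite-dimensional delta theorem of \cite{delta}. Your explicit attention to why the derivative collapses to a minimum over the solution set $\hat U$ is a welcome clarification of a point the paper glosses over.
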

The proof follows the same line of arguments as the proof of Theorem \ref{CLTsmooth} and it is omitted. 
\begin{corollary}
Suppose the conditions (a1)-(a4) of Theorem \ref{smoothclt} are satisfied,  additionally $f_1(u,\cdot)$ is differentiable for all $u\in \Uc$ , and both $f_1(\cdot,\cdot)$ and its derivative w.r.t. the second argument $\nabla f_1(\cdot,\cdot)$ are continuous w.r.t. both arguments and 
as well as the locally Lipschitz condition \eqref{f_j-Lipschitz} are satisfied for $f_2(u,\cdot)$. Let the symmetric kernel $K$ satisfy (k1)-(k2) and (s1)--(s2). Then the conclusions of Corollary~\ref{c:2level-mu} are satisfied for $\vartheta^{(n)}_K$.
\end{corollary}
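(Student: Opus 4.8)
The plan is to obtain this statement as a direct specialization of Corollary~\ref{c:2level-mu} to the kernel measure $d\mu_n(x)=\frac{1}{h_n^m}K\Big(\frac{x}{h_n}\Big)\,dx$, mirroring exactly the way Theorem~\ref{kernel_clt} specializes Theorem~\ref{smoothclt}. First I would record that, since $K$ is a density, this $\mu_n$ is normalized and converges weakly to the point mass at zero as $h_n\to 0$, so assumption (a1) holds; assumption (a2) follows from (s1a) together with the finiteness of the second moments of the envelopes. Because $f_1(u,\cdot)$ is differentiable and $f_1,\nabla f_1$ are jointly continuous by hypothesis, every structural ingredient of Corollary~\ref{c:2level-mu} is already in place except condition (a4), so this is where all the kernel-specific work concentrates.

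The key step is therefore to verify (a4) for the kernel measure, which I would carry out precisely as in the proof of Theorem~\ref{kernel_clt}. The locally Lipschitz condition~\eqref{f_j-Lipschitz} imposed on $f_2(u,\cdot)$ is exactly the growth hypothesis of Theorem~\ref{t:sufficient_cond}, so it suffices to confirm that $\{\mu_n\}$ is a strong approximate identity of order $p$. Substituting the kernel and rescaling $y=h_n z$ reduces this to
\begin{equation*}
\sqrt{n}\,h_n\int_{\Rb^m}\|z\|\,K(z)\,dz+\sqrt{n}\,h_n^{p}\int_{\Rb^m}\|z\|^{p}\,K(z)\,dz\xrightarrow[n\to\infty]{}0.
\end{equation*}
Both integrals are finite since $K$ has finite $p$-th moment by (k2), and both prefactors vanish under (s2b): indeed $nh_n^2\to 0$ forces $\sqrt{n}\,h_n\to 0$, and hence $\sqrt{n}\,h_n^{p}\to 0$ for every $p\ge 1$. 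By Theorem~\ref{t:sufficient_cond} this delivers the two limits required by (a4), equations \eqref{eq.15} and \eqref{eq.16}, for the class $\Ff=\{f_2(u,\cdot):u\in\Uc\}$.

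With (a1), (a2) and (a4) thus established for the kernel measure, Corollary~\ref{c:2level-mu} applies verbatim with $\mu_n$ replaced by the kernel, and its two conclusions transfer directly to $\vartheta^{(n)}_K$. I would emphasize that the genuinely delicate ingredient — the Hadamard directional differentiability of the optimal-value functional $\psi(h)=\min_{u\in\Uc}\Phi(u,h)$ at $\bar f_2$ and the ensuing delta-method argument — is already handled in the proof of Corollary~\ref{c:2level-mu} through Theorem~\ref{CLTsmooth}, and the kernel specialization contributes nothing new there, so I would not reproduce it. The only real obstacle, and the sole place demanding fresh computation, is the rate verification for (a4): controlling the smoothing bias and variance at the $\sqrt{n}$ scale, which is exactly the finite-$p$-moment combined with the $nh_n^2\to 0$ estimate displayed above.
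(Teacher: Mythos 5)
Your proposal is correct and follows essentially the same route the paper intends: the paper leaves this corollary without an explicit proof, but its pattern (Theorem~\ref{kernel_clt} from Theorem~\ref{smoothclt}, and Corollary~\ref{CLTkernel} from Theorem~\ref{CLTsmooth}) is exactly your reduction — verify (a1)--(a2) from the kernel being a density with (s1a), and verify (a4) via Theorem~\ref{t:sufficient_cond} using the rescaling $y=h_nz$ together with $nh_n^2\to 0$ and the finite $p$-th moment of $K$, then invoke Corollary~\ref{c:2level-mu}. Your computation $\sqrt{n}\,h_n\to 0$ and hence $\sqrt{n}\,h_n^p\to 0$ for $p\ge 1$ matches the paper's argument in the proof of Theorem~\ref{kernel_clt}.
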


In our numerical experiments, we have used the higher-order inverse risk measure as defined in \eqref{d:inverse} with parameters $c=20$ and $p=2$. We take independent identically distributed observations $X_i,i=1,\cdots,n$ from an independent identically distributed $X\sim\mathcal{N}(10,3)$ observations. 
The theoretical minimum is attained at $\hat{u}=14.5048$ resulting in the risk value $\varrho[X]=15.5163$. We consider the uniform kernel estimation $K(u)=\frac{1}{2h_n}$ with support on $|u| \leq h_n$. 
\begin{small}
\begin{multline*}
     \varrho^{(n)}_K=\min_{u \in\mathbb{R}} \Big\{ u+c\Big(\frac{1}{n}\sum_{i=1}^{n}\int (\max(0,y-u))^p \frac{1}{2 h_n} \mathbb{I}_{(|y-X_i| \leq h_n)}\; dy \Big)^{\frac{1}{p}}  \Big\} \\
      = \min_{u \in \mathbb{R}} \Big\{u+c\Big(\sum_{i=1}^{n}\frac{1}{2n(p+1)h_n}\big[(\max(0,h_n+X_i-u))^{p+1} \\
      -(\max(0,-h_n+X_i-u))^{p+1}\big]\Big)^{\frac{1}{p}}\Big\}  
\end{multline*}
\end{small}
The estimators $\varrho^{(n)}_{K,j}, j=1,2,\cdots,m$ for sample size $n$ are evaluated by $m$ replications. which comes
 \begin{equation*}
    \sqrt{n}[\rho^{(n)}_K-\rho] \dto \frac{c}{p}(\Eb\big[(\max(0,X-\hat{u}))\big]^p)^{\frac{1-p}{p}} G.
 \end{equation*}
  where $G$ is a standard normal random variable with zero and variance

\begin{figure}
    \begin{center}
        \includegraphics[width=2.5in,height=1in]{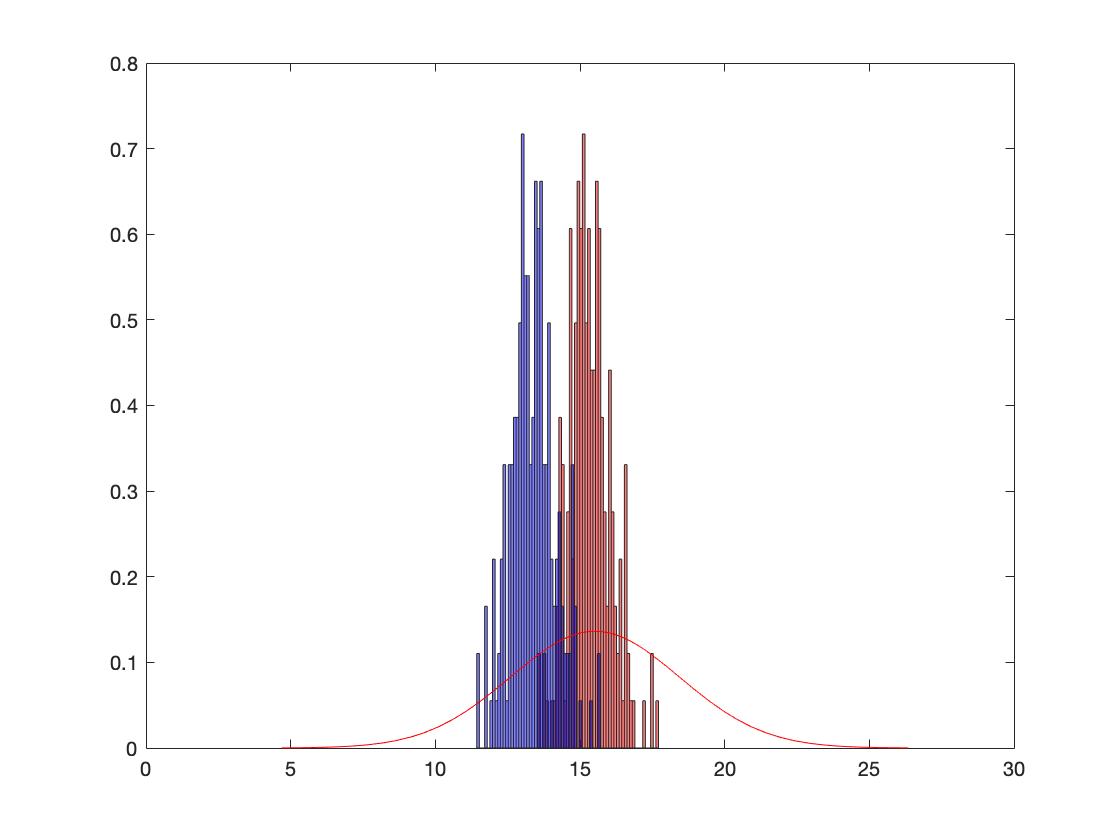}
        \includegraphics[width=2.5in,height=1in]{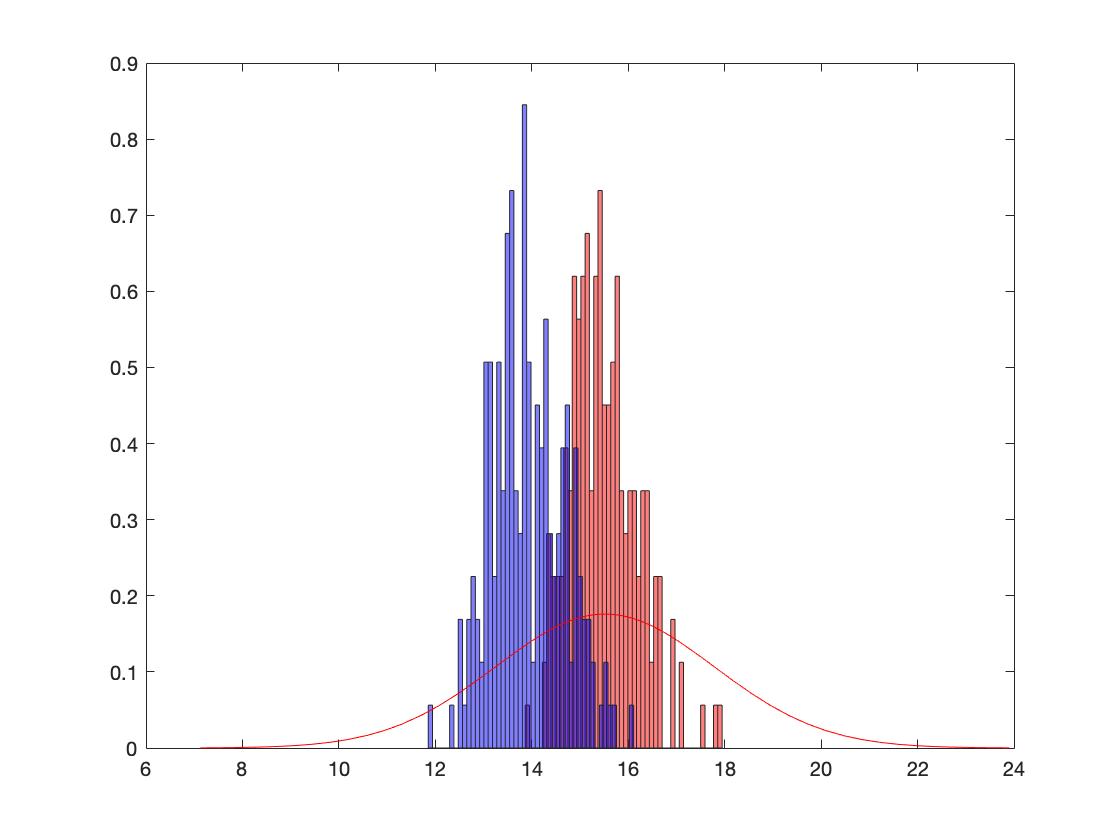}
        \includegraphics[width=2.5in,height=1in]{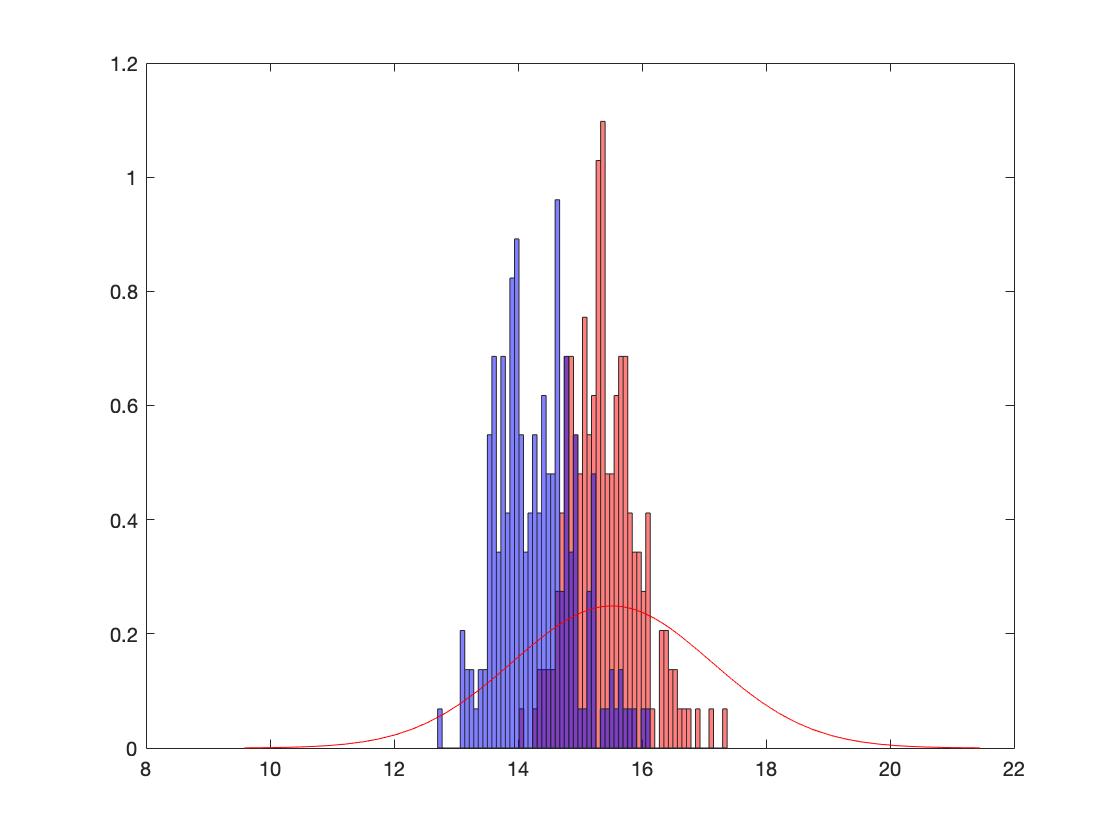}
        \includegraphics[width=2.5in,height=1in]{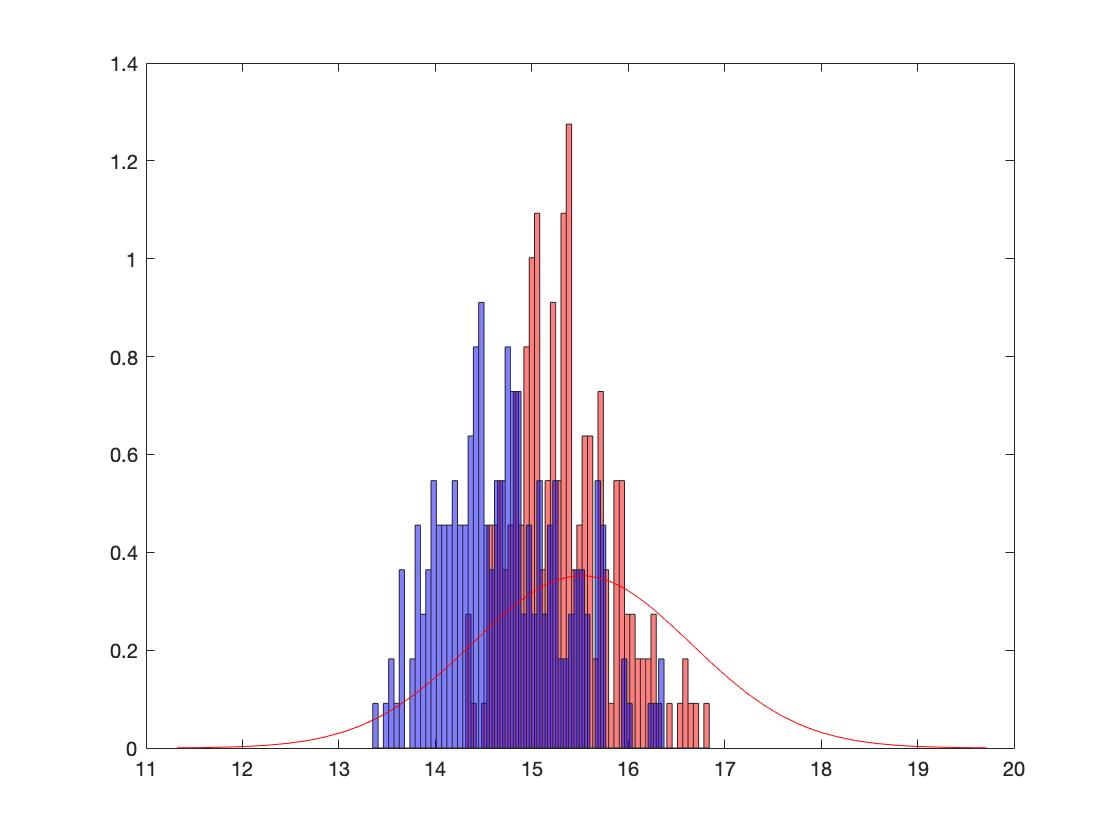}
    \end{center}
    \caption{Density histogram of the distribution of the estimator $\varrho^{(n)}_K$ and $\varrho^{(n)}$ with sample sizes of 30, 50, 100, and 200 (arranged clockwise).}
\end{figure}

 We use the bandwidth calculated according to the formula $1.06\hat{\sigma}n^\frac{1}{5}$, where $\hat{\sigma}$ is the estimated standard deviation of the data. As the sample size increases, both the empirical estimator (blue) and the uniform kernel estimator (red) progressively converge towards the true standard Brownian process depicted by the solid red curve. It is worth noting that the uniform kernel estimator exhibits less bias compared to the empirical estimator. This is our motivation to consider this type of estimate as frequently in practice, obtaining data is costly and we need to work with small samples.
 
\subsection{Comparison of two risk measures}\label{5.2}

Within the following two subsections, we present two primary applications of the Multivariate Central Limit Theorem (CLT) for smoothed estimators. Initially, we delve into the comparative analysis of two portfolios, denoted as $X^1$ and $X^2$, even if their independence is not assured. This scrutiny aims to discern the relative levels of risk attributed to each portfolio while employing a consistent measure of risk derived from sample-based approximations. Subsequently, we turn our attention to a unified portfolio and try to elucidate disparities arising from the utilization of distinct risk measures.\\

First, we consider the difference in risk for two variables. 
Let $X^{1}$ and $X^{2}$ be random variables in $\mathbb{R}^{m}$, not necessarily independent. Let $U \in \mathbb{R}^{2m}$, $X = (X^{1},X^{2})$ and let $\tilde{X}^{1}$ and $\tilde{X}^{2}$ be available random samples of size $n_1$ and $n_2$, respectively. We consider a composite risk functional $\varrho[u,\cdot]$ defined in \eqref{d:rho-vector} and such that the assumptions of Theorem ~\ref{multivariateclt} are satisfied.
Recall
\begin{equation}
 \varrho[X] = \mathbb{E} \left[ g_{1}\left( \mathbb{E} \left[  g_{2}\left( \mathbb{E} \left[ \cdots g_{k}\left(\mathbb{E}\left[ g_{k+1}\left( X \right)\right],X\right)\right]\cdots,X \right)\right],X\right)\right]. 
\end{equation}
We introduce additional function $g_0:\Rb^2\times\Rb^{2m}$, defined as follows: $g_0(\eta,X) = \eta_1 -\eta_2$ and observe that Theorem ~\ref{smoothclt} implies for $n=\min(n_1,n_2)$
\begin{equation}
\sqrt{n}\left( \varrho^{(n_1,J)}[\tilde{X}^{1}] - \rho^{(n_2,J)}[\tilde{X}^{2}] - \varrho[X^{1}] + \varrho[X^{2}]\right) \dto N\big(0, (1,-1)C^\top \Sigma_{g}C(1,-1)\big).
\end{equation}
This allows us to conduction statistical comparison for the two measures of risk. 

In our numerical experiments,  we consider independent identically distributed observations $Y_i, i = 1,\cdots,n$ from a normal distribution $Y \sim \mathcal{N}(20, 5)$. In the model \eqref{d:inverse}, we set the parameters as $c = 20$ and $p = 2$ and compare $\rho(X) - \rho(Y)$, where $X$ has the normal distribution $\mathcal{N}(10,3)$. The difference in the risk values is given by $\rho(X) - \rho(Y) = -11.6052$. The histogram is overlaid on the distribution $N(-11.6052, \frac{16.032^2}{n} + \frac{20.6972^2}{n})$, see in Figure 2.

It has been observed that as both $n$ and $m$ increase, the resulting outcome becomes increasingly closer to the theoretical distribution. The simulation study's findings indicate that, even when dealing with limited sample data in a vector-valued scenario, the uniform kernel estimator outperforms the empirical estimator in terms of accuracy. We use the same sample to estimate the risk with the smooth estimator and empirical estimator, aiming to address the challenge of limited sample size and enhance the quality of our estimation.

\begin{figure}
    \begin{center}
        \includegraphics[width=2.5in,height=1in]{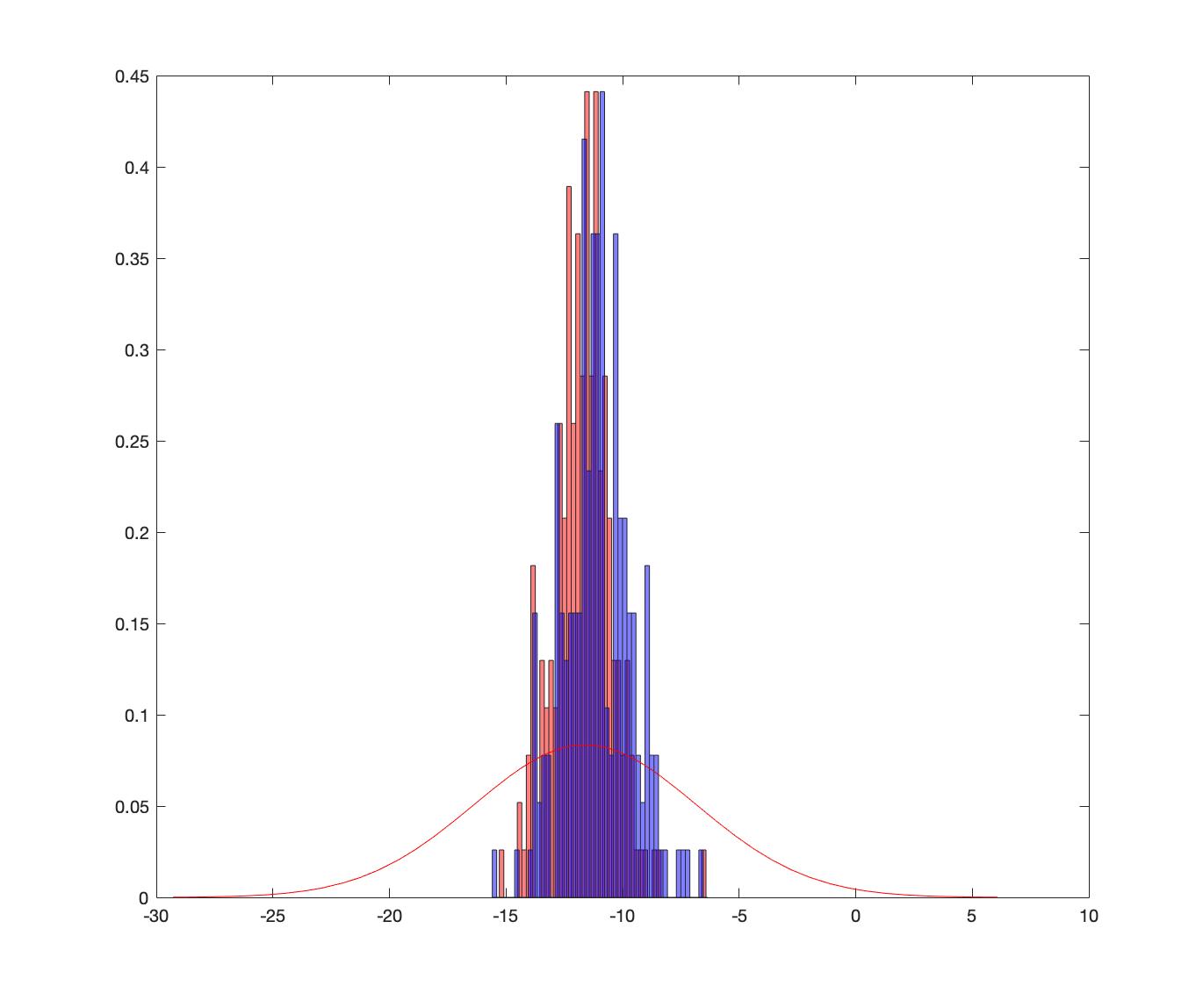}
        \includegraphics[width=2.5in,height=1in]{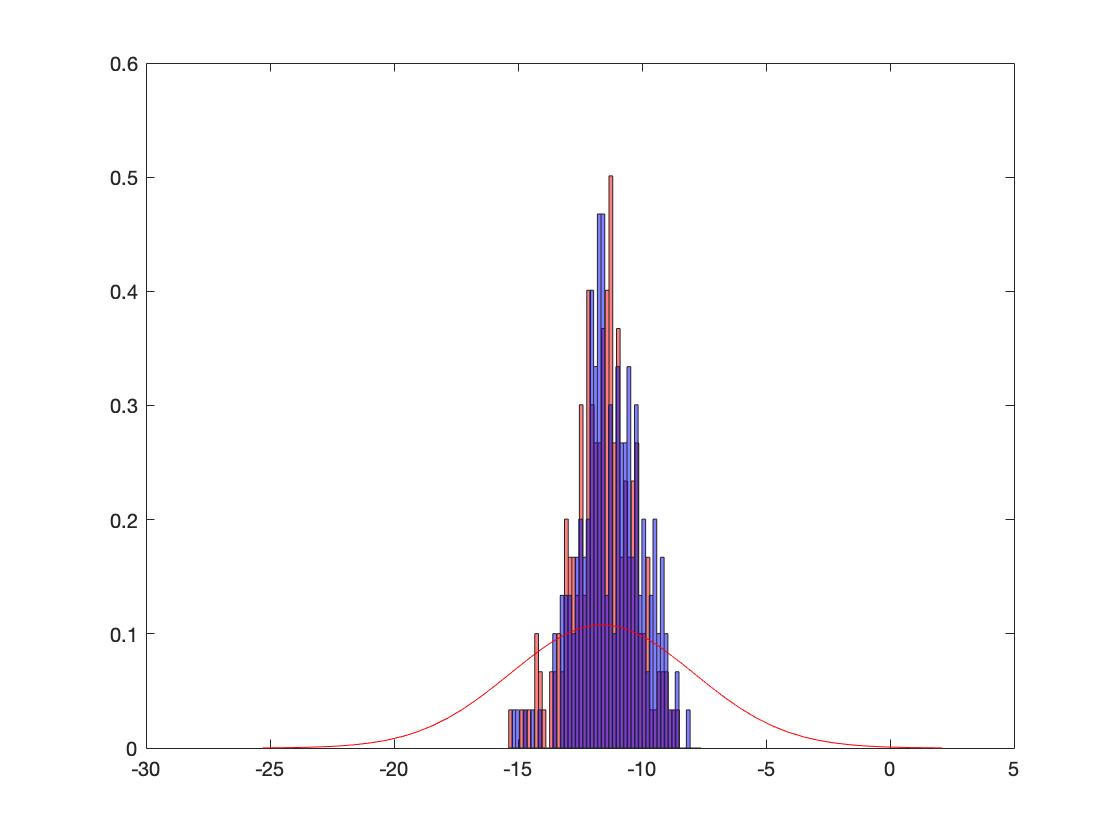}
        \includegraphics[width=2.5in,height=1in]{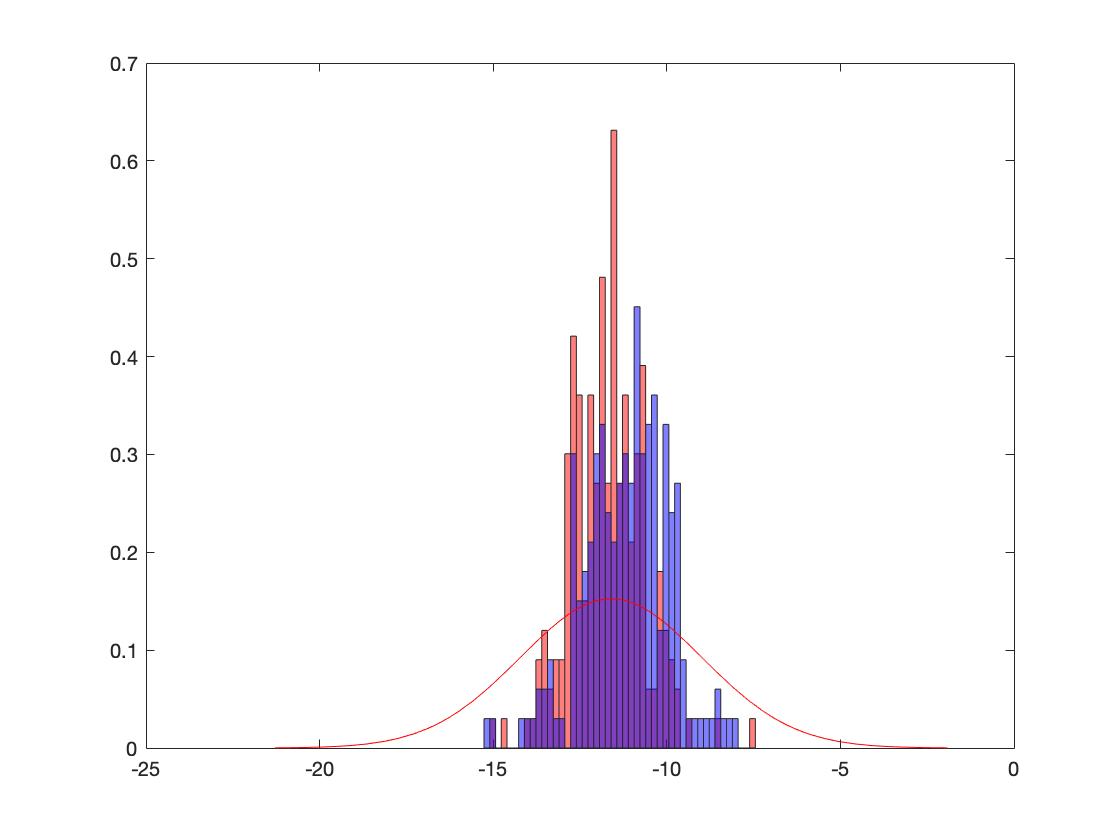}
        \includegraphics[width=2.5in,height=1in]{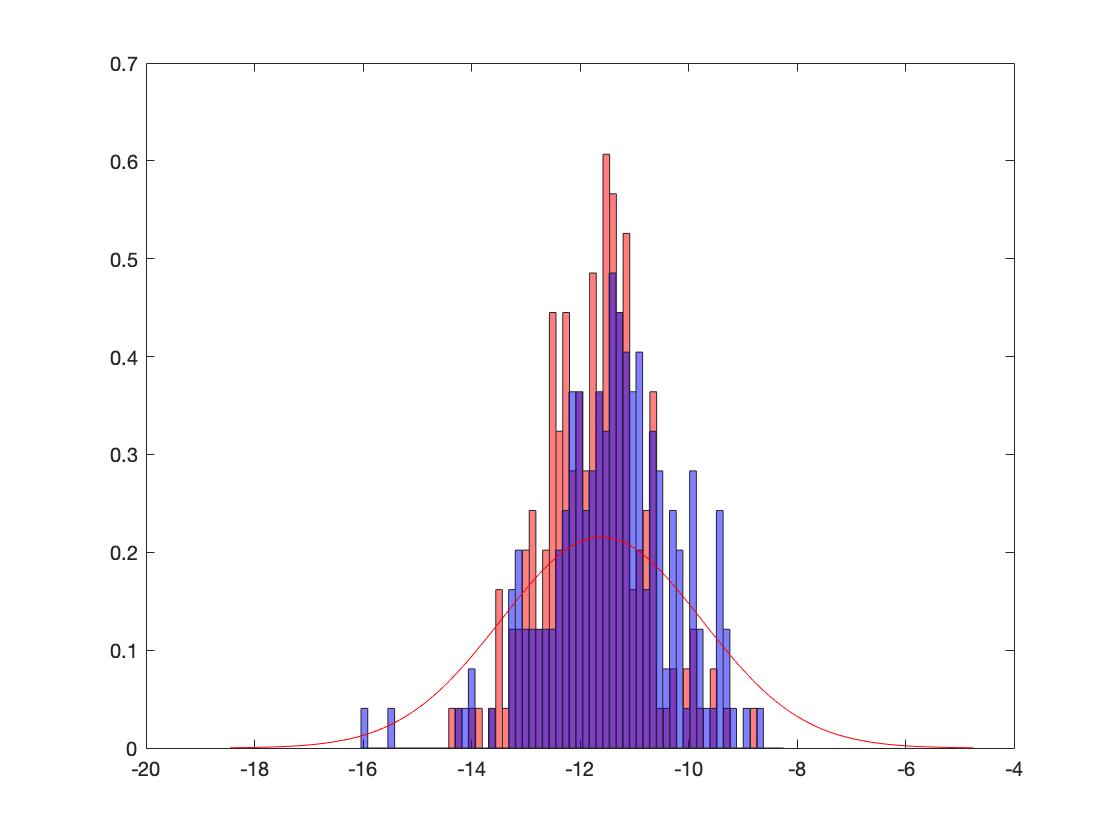}
    \end{center}
    \caption{Density histogram of the distribution of the estimator $\varrho^{(n)}_K$ and $\varrho^{(n)}$ with sample sizes of 30, 50, 100, and 200 (arranged clockwise).}
\end{figure}

We also note that in the same way, we could compare two distinct risk measures $\varrho_{1}[X^{1}] - \varrho_{2}[X^{2}]$ should this be desired.

\subsection{Central Limit Theorem for Systemic Risk}

Our study offers insights into addressing the issue of evaluating risk for a complex distributed system. 
Each individual risk is given by the following composite form:
\begin{equation}
\label{eq_75}
 X_R(i)=\varrho_{i}[X_{i}] = \mathbb{E} \left[ f_{1}^i\left(\mathbb{E} \left[ f_{2}^i\left( \mathbb{E} \left[ \cdots f_{k}^i\left(\mathbb{E}\left[ f_{k+1}^i\left( X_{i} \right)\right],X_{i}\right)\right]\cdots,X_{i} \right)\right],X_{i}\right)\right] 
\end{equation}
If the systemic risk is evaluated by a nonlinear aggregation function $\Lambda:\Rb^m\to \Rb$, we define the total risk as follows:
\begin{equation}
\label{d:systemic-rho-Lambda} 
\begin{gathered}
 \varrho[X] = \Lambda\big(\mathbb{E} \left[ g_{1}\left( \mathbb{E} \left[  g_{2}\left( \mathbb{E} \left[ \cdots g_{k}\left(\mathbb{E}\left[ g_{k+1}\left( X \right)\right],X\right)\right]\cdots,X \right)\right],X\right)\right]\big), \\
 \text{i.e. }\; \varrho[X]= \Lambda\big(\varrho_{1}(X^{1}),\dots,\rho_{\ell}(X^{\ell})\big). 
 \end{gathered}
\end{equation}
Here the functions $g_i$ are defined as in section~\ref{s:MCLT}.
Hence, we obtain the following statement.
\begin{corollary}
Assume that the conditions of Theorem \ref{multivariateclt} are satisfied and the function $\Lambda$  is Hadamard directionally differentiable,
then the systemic risk satisfies the following central limit formula:
\[
\sqrt{n}\;\left[ \Lambda \big(\varrho_{\mu,1}^{(n)},\dots , \varrho_{\mu,\ell} ^{(n)}\big)  - \varrho [X] \right] \dto \Lambda'\left[0;\xi_{1}(W)\right]. 
\]
\end{corollary}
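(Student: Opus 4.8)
The plan is to read this statement as a one-step consequence of the functional delta method applied to the vector-valued central limit formula of Theorem~\ref{multivariateclt}. The systemic risk $\varrho[X]$ in \eqref{d:systemic-rho-Lambda} is merely the scalar value of the finite-dimensional aggregation map $\Lambda:\Rb^\ell\to\Rb$ at the vector of individual composite functionals, so once the joint asymptotics of the estimated coordinates are in hand, the only additional ingredient needed is the differentiability of $\Lambda$.

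First I would invoke Theorem~\ref{multivariateclt}, whose hypotheses are assumed to hold. Writing $V_n=\big(\varrho^{(n)}_{\mu,1},\dots,\varrho^{(n)}_{\mu,\ell}\big)$ for the vector of smoothed coordinate estimators and $V=\big(\varrho_1[X^1],\dots,\varrho_\ell[X^\ell]\big)$ for its target, the theorem yields
\[
\sqrt{n}\,(V_n-V)\dto\xi_1(W),
\]
where $\xi_1(W)$ is the $\Rb^\ell$-valued limit built recursively from the Brownian bridge $W$ via \eqref{defxi}. Next, since $\Lambda$ is assumed Hadamard directionally differentiable and, as recalled in Section~\ref{s:framework}, compositions of Hadamard directionally differentiable maps are again Hadamard directionally differentiable, the composite map obtained by applying $\Lambda$ to the vector functional is Hadamard directionally differentiable at $V$. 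I would then apply the Delta Theorem of \cite{delta} to transport the convergence above through $\Lambda$, obtaining
\[
\sqrt{n}\,\big[\Lambda(V_n)-\Lambda(V)\big]\dto\Lambda'\big[0;\xi_1(W)\big].
\]
Since $\varrho[X]=\Lambda(V)$ by \eqref{d:systemic-rho-Lambda}, this is exactly the asserted central limit formula.

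The step I expect to require the most care is the use of the delta method in its \emph{directional} form. Because $\Lambda$ is only assumed Hadamard directionally differentiable, and not fully differentiable, the limiting law is in general non-Gaussian: it is the image of the random direction $\xi_1(W)$ under the positively homogeneous, possibly nonlinear, map $d\mapsto\Lambda'[0;d]$, rather than a linear transformation of a Gaussian vector. Justifying the passage to this limit is exactly the content of the delta theorem for Hadamard directionally differentiable maps in \cite{delta}, and I would make sure its measurability and tightness requirements are met by $\xi_1(W)$, which is guaranteed by Theorem~\ref{multivariateclt}. A secondary point, not a genuine obstacle, is that $\Lambda$ here acts on the finite-dimensional space $\Rb^\ell$, so no new infinite-dimensional complications arise beyond those already resolved in the proof of Theorem~\ref{multivariateclt}; the chain rule for directional derivatives then closes the argument.
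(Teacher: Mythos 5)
Your proposal is correct and follows essentially the same route the paper intends: the paper states this corollary without proof, but the companion theorem for a coherent outer risk measure immediately after it is proved by exactly this argument (Theorem~\ref{multivariateclt} for the vector of coordinate estimators, followed by the delta theorem for Hadamard directionally differentiable maps). The only caveat is notational and is inherited from the paper's own statement: the delta method yields the directional derivative of $\Lambda$ at the base point $V=\varrho[X]$, not at $0$, and you correctly treat the derivative as taken at $V$ in your discussion.
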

If the aggregation is obtained via an outer coherent risk measure $\varrho_0$ as described in section~\ref{s:MCLT}, then we can observe that we can evaluate $\varrho_{\rm sys}$ directly with additional composition since we have a finite probability space $(\Omega_\ell,\Fc_c, c)$. We can illustrate this point by using the mean semi-deviation of order $p\geq 1$ as the outer risk measure. Let 
$\varrho[X] = \big(\varrho_{1}(X^{1}),\dots,\rho_{\ell}(X^{\ell})\big)^\top$. 
Thus, we have 
\[
\varrho_{\rm sys}[X] = \langle c,\varrho[X]\rangle + \kappa \Big( \sum_{i=1}^\ell c_i \max\big(0, \varrho_{i}(X^{i})- \langle c,\varrho[X]\rangle\big)^p\Big)^\frac{1}{p},
\]
where $\kappa\in[0,1]$ is a fixed parameter. The following statement holds for the systemic measure of risk obtained in this way. 
\begin{theorem}
Assume that the conditions of Theorem \ref{multivariateclt} are satisfied and the outer risk measure $\varrho_0$ in the definition of $\varrho_{\rm sys} (\cdot)$ is coherent,
then the systemic risk satisfies the following central limit formula:
\begin{equation}
\label{clt-rho-sys}
\sqrt{n}\;\left[ \varrho_{\rm sys}^{(\mu,n)}[X]  - \varrho_{\rm sys} [X] \right] \dto \max_{\zeta\in \partial\varrho_0(0) } \langle \zeta, \xi_{1}(W)\rangle. 
\end{equation}
\end{theorem}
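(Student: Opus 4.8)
The plan is to read the systemic-risk estimator as a composition. Writing $\varrho[X]=(\varrho_1[X^1],\dots,\varrho_\ell[X^\ell])^\top\in\Rb^\ell$ for the vector of true individual risks and $\varrho_\mu^{(n,J)}[X]=(\varrho_{\mu,1}^{(n)},\dots,\varrho_{\mu,\ell}^{(n)})^\top$ for its mixed smoothed--empirical estimator, one has $\varrho_{\rm sys}[X]=\varrho_0(\varrho[X])$ and $\varrho_{\rm sys}^{(\mu,n)}[X]=\varrho_0\big(\varrho_\mu^{(n,J)}[X]\big)$, where $\varrho_0$ is the fixed outer coherent measure on the finite space $(\Omega_\ell,\Fc_c,c)$. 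Theorem~\ref{multivariateclt} already supplies the joint central limit behaviour of the inner vector, namely $\sqrt{n}\big(\varrho_\mu^{(n,J)}[X]-\varrho[X]\big)\dto\xi_1(W)$ in $\Rb^\ell$. The statement therefore reduces to transporting this convergence through the single outer map $\varrho_0$ by the directional delta theorem of \cite{delta}; what remains is to establish the Hadamard directional differentiability of $\varrho_0$ and to identify its directional derivative.

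Next I would record the analytic consequences of coherence. As a finite-valued, monotone, translation-equivariant and positively homogeneous convex functional on the finite-dimensional space $\Rb^\ell$, $\varrho_0$ is automatically locally Lipschitz, hence continuous and Hadamard directionally differentiable at every point, with
\[
\varrho_0'(Z;d)=\max_{\zeta\in\partial\varrho_0(Z)}\langle\zeta,d\rangle .
\]
Positive homogeneity yields the dual (support-function) representation $\varrho_0(Z)=\max_{\zeta\in\partial\varrho_0(0)}\langle\zeta,Z\rangle$, in which the risk envelope $\partial\varrho_0(0)$ is a nonempty compact convex subset of $\Rb^\ell$, while the subdifferential at an arbitrary point is the exposed face $\partial\varrho_0(Z)=\{\zeta\in\partial\varrho_0(0):\langle\zeta,Z\rangle=\varrho_0(Z)\}\subseteq\partial\varrho_0(0)$. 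In particular each $\varrho_0'(Z;\cdot)$ is the support function of a compact set, hence continuous, so the delta theorem applies with no further regularity assumptions.

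With these ingredients in hand I would invoke the delta theorem with inner statistic $T_n=\varrho_\mu^{(n,J)}[X]$, which converges to $\xi_1(W)$ at rate $\sqrt{n}$, and outer Hadamard directionally differentiable map $\varrho_0$, obtaining
\[
\sqrt{n}\big(\varrho_{\rm sys}^{(\mu,n)}[X]-\varrho_{\rm sys}[X]\big)=\sqrt{n}\big(\varrho_0(T_n)-\varrho_0(\varrho[X])\big)\dto\varrho_0'\big(\varrho[X];\xi_1(W)\big)=\max_{\zeta\in\partial\varrho_0(\varrho[X])}\langle\zeta,\xi_1(W)\rangle .
\]
This is the announced formula \eqref{clt-rho-sys}, modulo the point at which the subdifferential is taken (addressed below); the limit is the maximum, over the relevant compact subset of the risk envelope, of the linear functionals $\zeta\mapsto\langle\zeta,\xi_1(W)\rangle$ applied to the limit $\xi_1(W)$ of Theorem~\ref{multivariateclt}.

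The main obstacle is twofold. First, wherever $\partial\varrho_0$ is not a singleton the map $\varrho_0$ is not fully differentiable, so the limit is a genuinely sublinear --- not linear --- image of $\xi_1(W)$; the argument must therefore use the directional form of the delta theorem, and one must verify Hadamard directional (not merely ordinary directional) differentiability, which is exactly what local Lipschitzness of a finite convex function provides. Second, the delta theorem delivers the directional derivative at the true value $\varrho[X]$, i.e.\ the active face $\partial\varrho_0(\varrho[X])$, which is in general a proper subset of the full envelope $\partial\varrho_0(0)$ written in \eqref{clt-rho-sys}; the two coincide only when $\varrho[X]$ exposes the whole envelope (for instance when the individual risks are equal). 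Reconciling them is the delicate step: I would either read $\partial\varrho_0(0)$ in \eqref{clt-rho-sys} as shorthand for the active subdifferential $\partial\varrho_0(\varrho[X])$ that the delta theorem actually produces, or else isolate a normalization on the inner measures $\varrho_i$ under which the active face exhausts the envelope. Pinning down which reading is intended, and its hypotheses, is where I expect the real care to be required.
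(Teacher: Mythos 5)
Your proposal follows essentially the same route as the paper: read $\varrho_{\rm sys}^{(\mu,n)}$ as the composition of the outer coherent measure $\varrho_0$ with the inner vector estimator, invoke Theorem~\ref{multivariateclt} for the joint central limit behaviour of that inner vector, and push the limit through $\varrho_0$ by the directional delta theorem, using the Hadamard directional differentiability that convexity of a finite-valued $\varrho_0$ on $\Rb^\ell$ provides. The subtlety you flag at the end is real and is glossed over in the paper's own proof, which writes the limit as $\varrho'\left[0;\xi_{1}(W)\right]$ and identifies it with $\max_{\zeta\in \partial\varrho_0(0) } \langle \zeta, \xi_{1}(W)\rangle$ without justifying evaluation of the derivative at $0$; strictly, the delta theorem yields the directional derivative at the true value $\varrho[X]$, i.e.\ the maximum over the active face $\partial\varrho_0(\varrho[X])\subseteq\partial\varrho_0(0)$, so your more careful reading (or an added hypothesis making the two sets coincide) is what is actually needed.
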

\begin{proof}
Denoting $Y_i^{(n)} = \varrho_{\mu,i}^{(n)}$, we observe that the random variables $Y^{(n)}$ with realizations $Y_i^{(n)}$, $i=1,\dots,\ell$ converges point-wise to the random variable $X_R$ with the speed of $\sqrt{n}.$
Since every coherent measure of risk is convex with respect to its argument, it is also Hadamard directionally differentiable. We obtain
that 
\[
\sqrt{n}\;\left[ \varrho_{\rm sys}^{(\mu,n)}[X]  - \varrho_{\rm sys} [X] \right] \dto \varrho'\left[0;\xi_{1}(W)\right]. 
\]
We use the form of the directional derivative in direction $\xi$ of a coherent measure of risk. It is given by the 
$\max_{\zeta\in \partial\varrho_0(0) } \langle \zeta, \xi \rangle.$ 
Hence, we obtain the form of the limiting distribution as in \eqref{clt-rho-sys}.
\end{proof}
In accordance with the numerical experiment setup described in Subsections \ref{5.1} and \ref{5.2}, we proceed by employing the mean semi-deviation of order $p \geq 1$ as the outer risk measure. Specifically, let $\varrho[X] = \big(\varrho_{1}(X^1), \varrho_{2}(X^2)\big)^\top$, where $X^1 \sim \mathcal{N}(10, 3)$ and $X^2 \sim \mathcal{N}(20, 5)$. For the parameters $c_1 = c_2 = 0.5$, $p = 2$, and $\kappa = 0.5$, we compute $\varrho_{\text{sys}}[X]$ to be approximately $23.3704$. By comparing the empirical and uniform kernel estimators, we can derive the following asymptotic behavior: It is observed that the smooth estimator exhibits less bias, with its mean value closer to $23.3704$, compared to the empirical estimator when the outer mean semi-deviation risk measure is applied.

\begin{figure}
    \begin{center}
       \includegraphics[width=2.5in,height=1in]{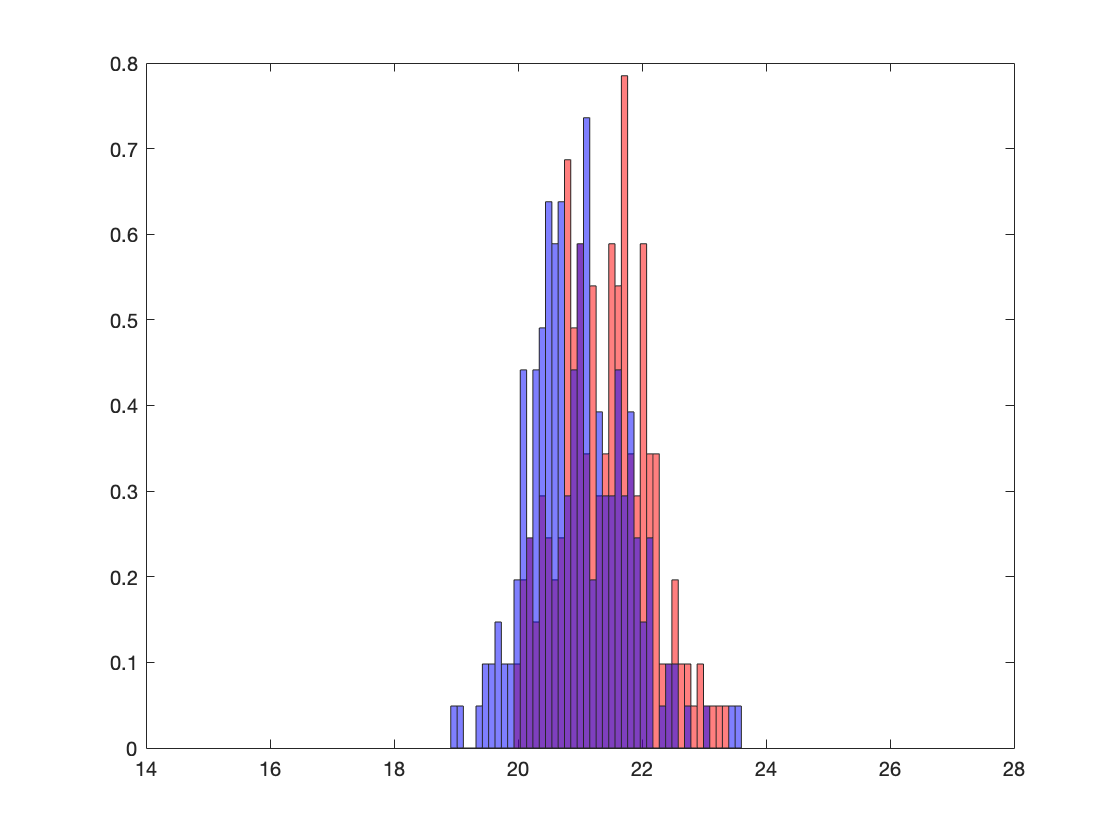}
        \includegraphics[width=2.5in,height=1in]{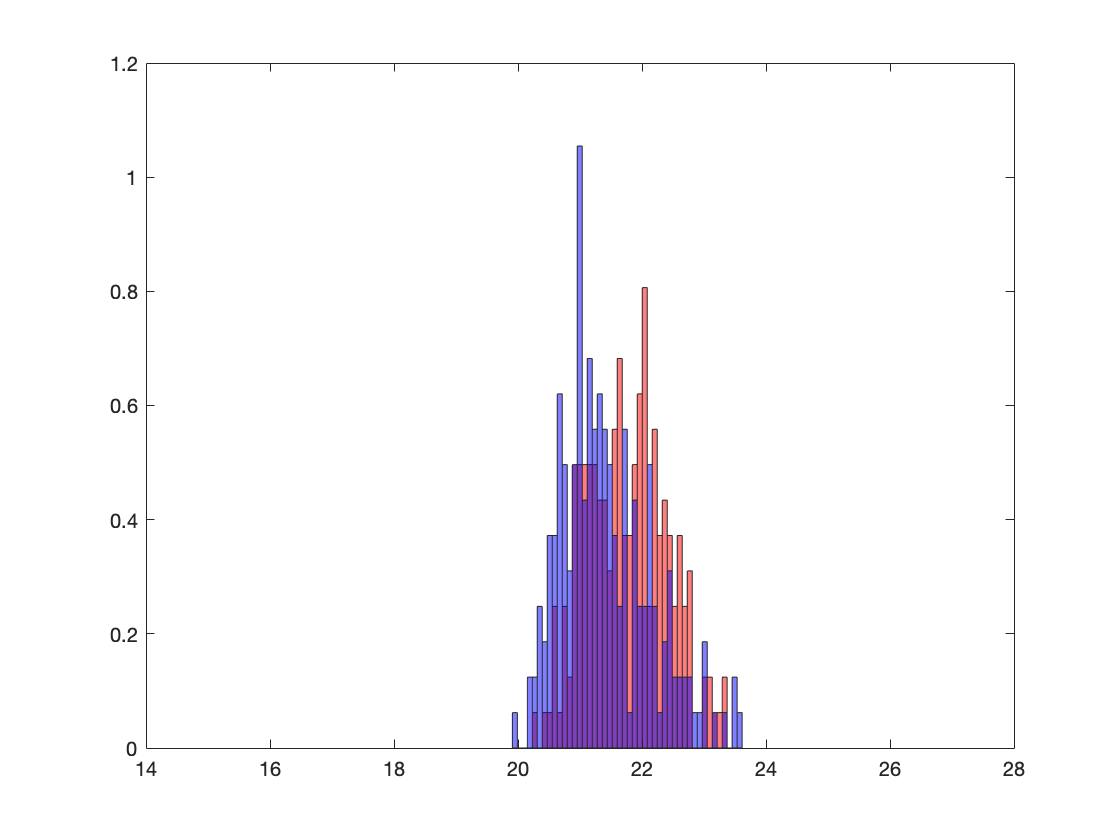}
        \includegraphics[width=2.5in,height=1in]{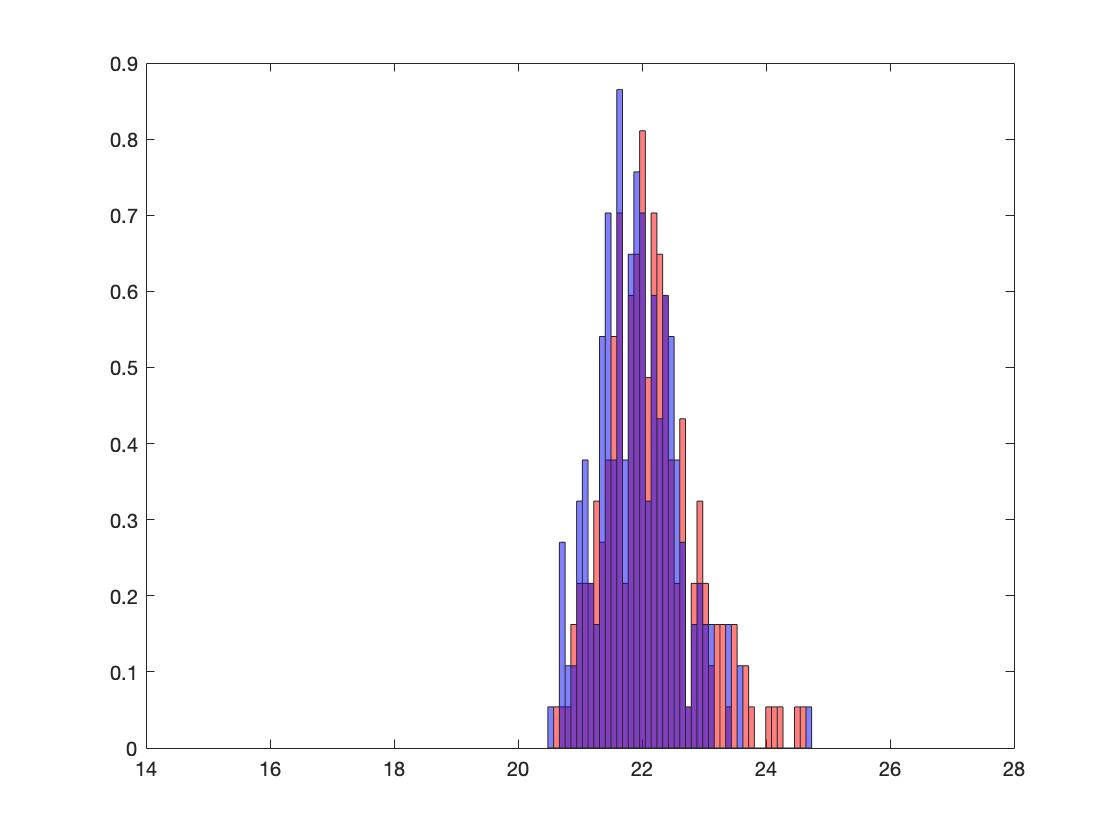}
        \includegraphics[width=2.5in,height=1in]{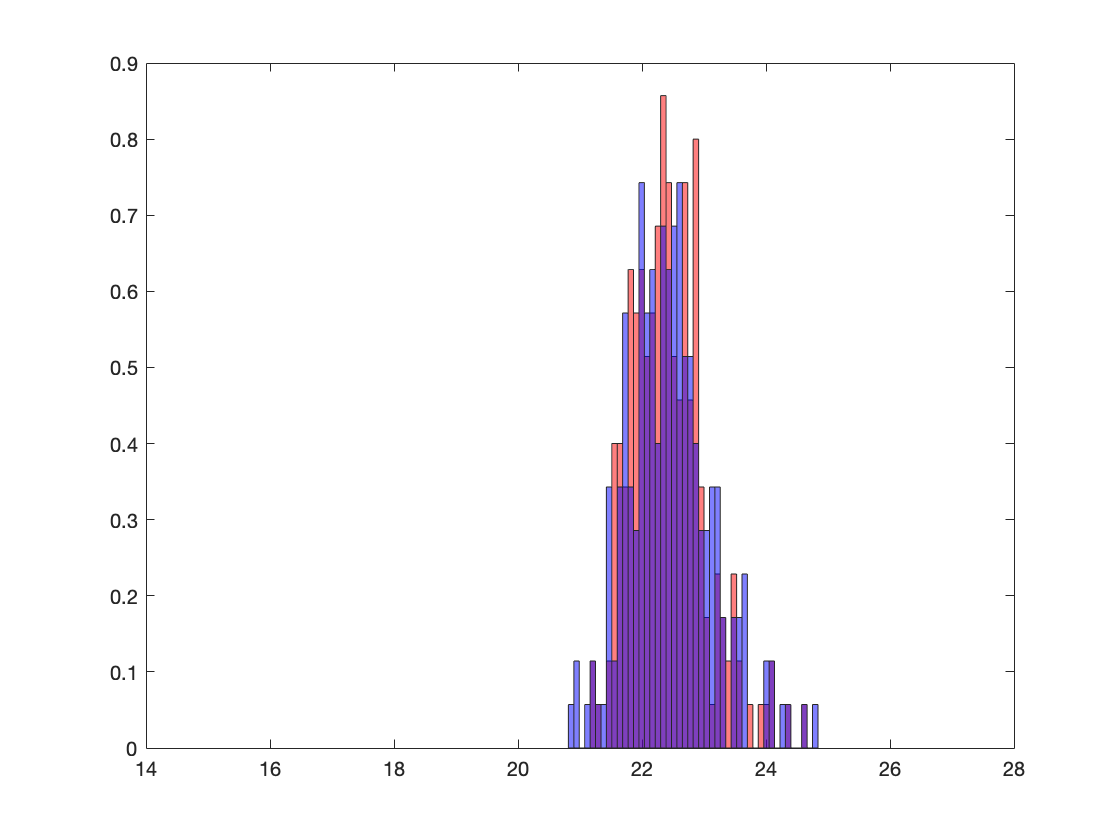}
    \end{center}
    \caption{Density histogram of the distribution of the systemic risk $\varrho^{(K,n)}_{\rm sys}[X]$ and $\varrho^{(n)}_{\rm sys}[X]$ with sample sizes of 30, 50, 100, and 200 (arranged clockwise).}
\end{figure}

\section{Conclusions}
\label{s:conclusions}

In conclusion, our paper makes significant contributions in several key areas. We introduce a novel central limit theorem for composite risk functionals that incorporate mixed estimators, encompassing both smoothing and empirical methods. Additionally, we extend our analysis to  multi-variate measures, providing insights into the verification of assumptions crucial for the central limit formulae. This extension proves valuable for the statistical estimation of systemic risk as well as in the context of statistical tests aimed at comparing levels of riskiness. Furthermore, we specialize our central limit theorem to the application of kernel estimator showing conditions for the bandwidth behavior. Our simulation study demonstrates that the adoption of a smooth estimator yields reduced bias compared to an empirical estimator, a trend observed across both univariate and multivariate scenarios, particularly in addressing challenges associated with small sample sizes.

\bibliographystyle{plain}
\bibliography{multivariate_composite_functionals_arvix}

\end{document}